\newcommand{\R}{\mathbb{R}}
\newcommand{\N}{\mathbb{N}}
\newcommand{\eqdef}{\stackrel{\text{\tiny{def}}}{=}}   
\newcommand{\Lip}{\mbox{\rm Lip}}
\newcommand{\be} {\begin{eqnarray}}
\newcommand{\ee} {\end{eqnarray}}
\newcommand{\Bea} {\begin{eqnarray*}}
	\newcommand{\Eea} {\end{eqnarray*}}
\newcommand{\pa} {\partial}
\newcommand{\re}{\mathbb{R}}
\newcommand{\al} {\alpha}
\newcommand{\rr}{\rightarrow}
\newcommand{\dip}{\displaystyle}
\newcommand{\B} {\beta}
\newcommand{\de} {\delta}
\newcommand{\g} {\gamma}
\newcommand{\p}  {\prime}
\newcommand{\e}  {\varepsilon}
\newcommand{\De} {\Delta}
\newcommand{\la} {\lambda}
\newcommand{\si} {\sigma}
\newcommand{\f}{\infty}
\newcommand{\noi} {\noindent}
\newcommand{\ga}{\gamma}
\def\abs#1{\vert#1\vert}
\newtheorem{theorem}{Theorem}[section]
\newtheorem{corollary}{Corollary}[section]
\newtheorem{proposition}{Proposition}[section]
\newtheorem{lemma}{Lemma}[section]
\newtheorem{definition}{Definition}[section]
\newtheorem{remark}{Remark}[section]
\title{\bf Optimal regularity for all time 
 for entropy solutions of conservation laws in $BV^s$ }
\author{
Shyam Sundar  Ghoshal\thanks{TIFR, Centre for Apllicable Mathematics,  Bangalore, India. ghoshal@tifrbng.res.in / s.s.ghoshal@gmail.com},
Billel Guelmame\thanks{Universit\'{e} C\^ote d'Azur, CNRS \& Inria, LJAD, 
France. billel.guelmame@unice.fr}, \\
Animesh Jana\thanks{TIFR, Centre for Apllicable Mathematics,  Bangalore, India. animesh@tifrbng.res.in},
St\'ephane Junca\thanks{Universit\'{e} C\^ote d'Azur, CNRS \& Inria, LJAD, France. 
stephane.junca@unice.fr}
}
\date{}
\begin{document}
\everymath{\displaystyle}
\noindent

\bibliographystyle{plain}

\maketitle
\vspace{-.5cm}
\abstract{ 
This paper deals with the optimal regularity for entropy solutions of conservation laws. 
For this purpose, we use two key ingredients: (a) fine structure  of entropy solutions and (b) fractional $BV$ spaces.
We show that optimality  of the regularizing effect for the initial value problem  from $L^\f$ to fractional Sobolev space 
and fractional $BV$ spaces 
is valid for all time.  
Previously, such optimality was proven only for a finite time, before the nonlinear interaction of waves. Here for some well-chosen examples,  the sharp regularity is obtained after the interaction of waves. Moreover, we prove sharp smoothing in $BV^s$ for a convex scalar conservation law with a linear source term.
 Next, we  provide  an upper bound of the maximal smoothing effect for nonlinear scalar multi-dimensional conservation laws and some hyperbolic systems in one or  multi-dimension. \\

}

{\bf Key words:}  Conservation laws, entropy solutions, shocks, smoothing effect, fractional $BV$ spaces $BV^s$.

\tableofcontents

\section{Introduction}
\quad
For nonlinear conservation laws, it is known since Lax-Ole\u{\i}nik \cite{Lax,O57} that the entropy solution can have a better regularity than the initial data for Burgers type fluxes. Such smoothing effect has been obtained in fractional Sobolev spaces \cite{LPT} and recently in fractional $BV$ space \cite{BGJ6} for more general fluxes. The optimality of such regularization is largely open in general. For scalar 1-D conservation laws, there are some optimal results proven up to finite time \cite{CJ1,DW,EM}. The aim of this article is to obtain the same optimality for all time.
 
 
We start with the one-dimensional scalar conservation laws which reads as follow:
\begin{eqnarray}
\frac{\pa u}{\pa t}+\frac{\pa f(u)}{\pa x} &=&0\quad\quad\mbox{for }x\in\R,\,t>0,\label{eqn1}\\
u(x,0)&=&u_0(x)\mbox{ for }x\in\R.\label{eqn2}
\end{eqnarray}
The classical well-posedness theory for the Cauchy problem \eqref{eqn1}--\eqref{eqn2}  is available for $L^\f$ and $BV$ initial data  \cite{Kruzkov,Lax,O57}. $BV$-regularizing effect on entropy solutions has been established in \cite{Lax,O57} for uniformly convex fluxes. It is well know that if the flux function is not uniformly convex then in general, the entropy solution of (\ref{eqn1}) may not have a finite total variation, \cite{AGV,Cheng83}. It can be shown that in one dimension if $f^{\p\p}$ vanishes at some point then there exists a class of initial data such that $f$ can not regularize the corresponding entropy solution up to $BV$ for all time \cite{C2flux}. Hence, to understand the optimal regularity of the entropy solution of (\ref{eqn1}), one works with more general space like  fractional Sobolev space $W^{s,p}$ and  fractional $BV$ spaces $BV^s$, $0<s<1$, $1 \leq p$. 

The advantage of  $BV^s$ spaces  is to recover the fractional Sobolev regularity $W^{\si,p}$ for all $\si<s,1\leq p<s^{-1}$ 
 and to get the $BV$ like trace properties of entropy solutions \cite{COW,P05,P07}. In one dimension, existence of the entropy solutions of (\ref{eqn1}) in $BV^s$,  with $BV^s$ data has been done in \cite{BGJ6} and   with $L^\infty$ data  in the same spaces in \cite{BGJ6, EM, EMproc}. For non-convex fluxes  a Lagrangian framework is used \cite{BM16,BM17}.
  For the scalar 1-D case,  the $BV^s$  smoothing  effect corresponds to the optimal smoothing effect conjectured by Lions, Perthame and Tadmor  in Sobolev spaces with the same fractional derivative $s$ \cite{LPT}. In multi-dimension, for a $C^{2,\g}$ flux, it has been shown \cite{C2flux} that entropy solutions do not need to have fractional derivative $s+\e$ for $\e>0$. For multi-dimensional scalar conservation laws, regularizing effect in fractional Sobolev space was first studied in \cite{LPT}. We refer \cite{TT07} for the best known result in this direction and \cite{Jabin} for further improvement with some extra assumptions, see also \cite{Gess-Lamy} for such results with a source term. The proof of optimality of the exponent $s>0$ is limited  to some one-dimensional scalar examples  \cite{CJ1,DW} and  before the nonlinear interaction of waves. It has been extended for the scalar multi-dimensional case in \cite{CJ2,J4} but not for all time. Recently, in \cite{C2flux} it has been shown that in multi-dimension for any $C^2$ flux $f$ there exists initial data $u_0$ such that the corresponding entropy solution is not in $BV$ for all time $t>0$. 



The present article resolves the following: 
\begin{itemize}
\item 
In one dimension, the optimal  smoothing effect in  fractional  $BV$ spaces is known for the equation (\ref{eqn1}) in bounded strip of time $(0,T)$ for $T>0$, before the interactions of waves \cite{CJ1}. So it is natural to ask the following question:
\begin{equation}\tag{\textbf{Q}{\tiny }}\label{Q1}
\begin{array}{c}
\parbox{13cm}{\emph{Does there exists an entropy solution to \eqref{eqn1} with compact support such that it does not belong to $BV^{s+\e}$ for all $t>0,\,\e>0$?}}
\end{array}
\end{equation}
where $s$ depends on the non-linearity of flux function. We first obtain an entropy solution to \eqref{eqn1} for flux $f(u)=(1+p)^{-1}\abs{u}^{1+p}$ such that $TV^{s+\varepsilon}(u(\cdot,t))=\f$, for all $\varepsilon>0$, for all $t>0$, whereas $TV^{s}(u(\cdot,t))<\f$ with $s=p^{-1}$. Later we generalize this result for a larger class of fluxes.
\item We extend the above result to  higher dimension under some smooth regularity assumption on the flux in section \ref{sec:multi-D}.
\item We are also able to answer the question \eqref{Q1} for entropy solutions to balance laws which read as follow  where $\alpha \in L^\infty((0,+\infty,\R)$:
\begin{eqnarray}
\frac{\pa u}{\pa t}+\frac{\pa f(u)}{\pa x} &=&\al(t)u\quad\quad\mbox{for }x\in\R,\,t>0,\label{balance_laws}\\
u(x,0)&=&u_0(x)\quad\quad\mbox{for }x\in\R.  \label{balance_laws0}
\end{eqnarray}

\item  Smoothing effect for balance laws of type \eqref{balance_laws} in fractional BV space is not known. Based on a recent  Lax-Ole\u{\i}nik type formula \cite{ASG} we prove the $BV^s$ regularizing effect  for entropy solutions to \eqref{balance_laws} with a  convex flux satisfying the $p$-degeneracy {power law} condition \cite{BGJ6},
\begin{equation}\label{flux-condition}
\frac{\abs{f^{\p}(u)-f^{\p}(v)}}{\abs{u-v}^{p}}\geq c_0>0\mbox{ for }u\neq v\in [-M,M].
\end{equation}
  We recall that if $f \in C^2([-M,M])$ then $p \geq 1$ \cite{BGJ6}. 
The exact power-law degneracy is given by the infinimum of $p$ satisfying \eqref{flux-condition}. When $f$ is smooth, the infinimum is a minimum \cite{BGJ6}.

\item Finding an upper bound of the  maximal regularity, even in a given strip ($\re\times[0,T]$),  was unknown for some  triangular systems of conservation laws e.g.,  a pressure swing adsorption system. We answer this question for  a class of 1-D triangular systems and the multi-D Keyfitz-Kranzer system in section \ref{sec:sys} and \ref{sec:KK-sys} respectively. 

\end{itemize}

To provide an answer to the question \eqref{Q1} we recall some of the previously constructed examples \cite{AGG,SC,AGV}. In Section \ref{sec:1D}, Theorem \ref{thm1} provides the direct answer to \eqref{Q1} for  a power-law type flux function  $f(u)=(1+p)^{-1}\abs{u}^{1+p}$. We have discussed before that convex flux function with $p$-degeneracy (i.e., satisfying \eqref{flux-condition}) gives a regularizing  effect in $BV^{s}$ with $s= 1/p$ \cite{BGJ6}. We construct an entropy solution $u$ to \eqref{eqn1} such that $TV^{s+\e}(u(\cdot,t))=\f$ for all $t>0$ and $\e>0$ with $s=1/p$. Following  the constructions in \cite{SC,AGV} we build the entropy solution $u$, consisting infinitely many shock profiles in a compact interval. These shock profiles are named \textit{Asymptotically Single Shock Packet (ASSP)} in \cite{AGG}. Loosely speaking an \textit{ASSP} is a solution with a special structure between two parallel lines in  the half plane $\re_x\times\re_t^+$ such that in large time only one shock curve appears between them. \textit{ASSP} plays a role to describe structure and large time behaviour of the entropy solution to strictly convex flux \cite{AGG}. For the more complex structures of solutions for non-convex fluxes we refer interested reader to \cite{BM16,BM17}. The construction is done in Section \ref{sec:1D}. The building block of such solutions has a support in half strip $[a,b]\times\re_+$ for some $a<b$ and having an oscillation of amount $\de_n$ up to time $t_n$. Then we club all of these building-blocks  to get a solution with the same regularity for all time. 
Similar type of constructions for a slightly different aspect have been used in \cite{CPAM,Ghoshal-jde} to build non $BV$ solutions of scalar conservation laws with discontinuous fluxes.
A larger class of non-uniformly convex fluxes  has been considered in  \cite{AGV}  to build  non-$BV$ solutions for all time. Such a flux $f$ satisfies, 
\begin{equation}\label{decay_condition}
0<f^{\p}(a)-f^{\p}(b)\leq C (a-b)^{q}\mbox{ for all }b<\theta_f<a
\end{equation}
where $q>1$ and $C>0$, which implies that  $f^{\p\p}(\theta_f)=0$. 
Condition \eqref{decay_condition} is about a  minimal degeneracy condition of $f^{\p}$ near the point where $f^{\p\p}$ vanishes. We answer the question \eqref{Q1} for this general class of convex functions satisfying \eqref{decay_condition}.

 The notion of \textit{ASSP} has been generalized recently in \cite{ASG} for balance laws of type \eqref{balance_laws}. Based on a Lax-Ole\u{\i}nik type formula \cite{ASG} for entropy solutions to \eqref{balance_laws}, we are able to answer analogous version of \eqref{Q1} for such a balance law with linear source term. In Section \ref{sec:1D} we provide a construction to show the optimality of the regularizing effect in balance law set up for power-law type flux functions. Like the case for $\al\equiv0$, the constructed solution for balance law is a juxtaposition of infinitely many \textit{ASSP}. Naturally, for such balance laws, the boundaries of \textit{ASSP} are curves instead  straight lines. Moreover, when $\al\equiv 0$, the case of conservation laws is recovered.  
  We choose to answer to the question  \eqref{Q1} in this slightly more general setting.

\par 
In the remainder of the paper, Sections \ref{sec:multi-D}, \ref{sec:sys} and \ref{sec:KK-sys}, the results obtained for the one dimensional scalar case are used to bound the maximal smoothing effect for solutions of  three different problems, namely, scalar multidimensional equations,  a class of triangular systems and  a  multidimensional system.
For  the multidimensional case, planar waves are used as in  \cite{CJ2,C2flux,J4}.
For a class of triangular systems involving a transport equation,   the main problem is to keep  the linear component bounded and not being a $\delta-$ shock while the nonlinear component belongs to the critical BV space. 
For multi-dimensional Keyfitz-Kranzer system \cite{KK}, it has been shown, \cite{Del-kk} that small $TV$ bound of initial data is not enough to get immediately a $BV$ renormalized solution of the  Keyfitz-Kranzer system. In this article we implement his construction to get a similar blow up in all $BV^s$ spaces, $s>0$.


\section{Fractional $BV$ spaces, $BV^s$, $0 < s \leq 1$ }\label{sec:BVs}

\quad In this section, the definition of generalized $BV(\R)$ spaces are recalled \cite{MO}. Then the multi-D case is stated. 
\begin{definition}[$BV^s(\R,\R)$]
Let  $p=1/s$, the $TV^s$ variation also called the
total $p$-variation  of any real function $v$ is:
\begin{equation}
\mathrm{TV}^s\/v\ =\ \sup_{\{x_i\} \in \mathcal{P}}\ \sum_{i=2}^n |v(x_i)-v(x_{i-1})|^p
\end{equation}
where $\mathcal{P}=\{ \{x_1, \cdots ,x_n \},\ x_1<\cdots<x_n , \  2 \leq n \in \N \} $ is the set of subdivisions 
of $\R$. 

The space $\mathrm{BV}^s(\R,\R)$ is the subset of real functions such that,
\begin{equation}
\mathrm{BV}^s(\R)=\{v, \,  \mathrm{TV}^s ( v)<  \infty \}.
\end{equation}
\end{definition}

Notice that $BV^1=BV$ and $BV^s \subset L^\infty$ for all $0 < s \leq 1$. By convention, we set $BV^0=L^\infty$. 
A similar definition can be used to defined $BV^s(I,\R)$ where $I \subset \R$, only considering the subdivisions of $I$. The factional Sobolev space $W^{s,p}$ can be defined as follows:
\begin{definition}
 Let $\Omega\subset\re^N$ be open. Let $s\in(0,1)$ and $p\in[1,\f)$. By $W^{s,p}(\Omega)$ we denote the set of all $u\in L^p(\Omega)$ such that
 \begin{equation}
 \frac{\abs{u(x)-u(y)}}{\abs{x-y}^{s+\frac{N}{p}}}\in L^p(\Omega\times\Omega).
 \end{equation}
\end{definition}
 It is worth mentioning that $BV^s$ does not coincide with fractional Sobolev space, $W^{s,p}$ but it is closely related to $W^{s,p}$ with the critical exponent $p$ for the traces  theory, that is, $s \, p =1$. More precisely, 
 for all $\varepsilon>0, BV^s_{loc} \subset W^{s-\varepsilon,1/s}_{loc}  \subset  W^{s-\varepsilon,1}_{loc} $ \cite{BGJ6}.  All the examples valid for all times in this article present shocks, so are  discontinuous and  therefore never belong to $W^{s+\varepsilon,1/s}, \forall \varepsilon > 0.$ Thus, 
a non $BV^s$ regularity corresponds to a non Sobolev regularity with the same exponent up to any positive $\varepsilon$. The optimality can also be studied in $BV^s$ and corresponds to the similar Sobolev optimaty. 
Notice also that the estimates in fractional $BV$ spaces can be simpler than in fractional Sobolev spaces as in \cite{CJJ}.   It is the reason why only result in $BV^s$ are given in this paper. 
    
Furthermore, $BV^s$ regularity guarantees left and right traces like $BV$ functions. That is why $BV^s$ spaces seem more well fitted to study the regularity of the solutions of conservation laws than the corresponding Sobolev spaces.

To extend the definition of $BV^s$ for the multi-D case, a theorem characterizing $BV^s$ in 1-D is used. 
This theorem characterizes the space   $BV^s$     with the Holder space $\Lip^s$ and  the $BV$ space.  It is due to Michel  Bruneau \cite{Bruneau74}. 
\begin{theorem}[{\textcolor{orange}{Bruneau,  1974}}] \label{thm:Bruneau}
For any $u \in BV^s $ there exists  the following factorization by  a  $s-$Holder function and a  $BV$ function,
 $$u \in BV^s \Leftrightarrow \exists\; L \in \Lip^s(\R,\R), \;  \exists\;  v  \in BV(\R) \mbox{ s.t. } \  u =L \circ v.$$ 
 That means that
 $$BV^s(\R,\R) = \Lip^s (\R,\R)\circ BV (\R,\R).$$
\end{theorem}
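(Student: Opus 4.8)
The plan is to prove both implications of $u \in BV^s \Leftrightarrow u = L \circ v$ with $L \in \Lip^s$, $v \in BV$. The easy direction is the converse: if $u = L \circ v$ with $L$ $s$-Hölder of constant $C$ and $v \in BV$, then for any subdivision $x_1 < \cdots < x_n$ one has $\sum_i |u(x_i) - u(x_{i-1})|^{1/s} = \sum_i |L(v(x_i)) - L(v(x_{i-1}))|^{1/s} \le C^{1/s} \sum_i |v(x_i) - v(x_{i-1})| \le C^{1/s}\, TV(v)$, so $TV^s(u) \le C^{1/s} TV(v) < \infty$. This requires nothing beyond the definition.

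The substantive direction is: given $u \in BV^s$, construct $v \in BV$ and $L \in \Lip^s$ with $u = L \circ v$. The natural choice is to let $v$ be (a normalization of) the ``$s$-variation function'' of $u$, namely $v(x) = V(x) := TV^s(u; (-\infty, x])^{\,s}$ — the $s$-th root of the accumulated total $p$-variation up to $x$ — which is nondecreasing and bounded by $TV^s(u)^s < \infty$, hence lies in $BV$. Then I would define $L$ on the range of $v$ by $L(v(x)) := u(x)$; the key point to verify is that this is well-defined and $s$-Hölder. Well-definedness follows because if $v(x) = v(y)$ for $x < y$, then the accumulated $p$-variation on $[x,y]$ is zero, which forces $|u(x) - u(y)|^{1/s} \le TV^s(u;[x,y]) = 0$, so $u(x) = u(y)$. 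For the Hölder bound: for $x < y$, $|u(x) - u(y)|^{1/s} \le TV^s(u;[x,y]) = V(y) - V(x) = v(y)^{1/s} - v(x)^{1/s}$, and since $t \mapsto t^{1/s}$ with $1/s \ge 1$ satisfies $b^{1/s} - a^{1/s} \le (b-a)^{1/s} \cdot (\text{something})$... actually more carefully, one uses $|u(x)-u(y)| \le (V(y)-V(x))^s = |v(y)-v(x)|^s$, giving $L$ Hölder with constant $1$ on $\mathrm{range}(v)$. Finally, extend $L$ from $\overline{\mathrm{range}(v)}$ to all of $\R$ preserving the $s$-Hölder norm (e.g. by the standard Hölder/McShane-type extension, or by piecewise-affine/constant interpolation across the gaps, noting that affine interpolation does not increase the $s$-Hölder seminorm for $s \le 1$).

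I expect the main obstacles to be two technical points. First, handling the jump set of $u$: where $u$ has a jump, $v$ also jumps, and one must make a consistent choice (left-continuous vs.\ right-continuous representatives) so that $L \circ v = u$ holds pointwise and $v$ genuinely lies in $BV$; this is bookkeeping but must be done carefully so that the composition reproduces $u$ exactly rather than merely up to a countable set. Second, verifying that the $s$-Hölder extension of $L$ off the (possibly Cantor-like, non-closed) range of $v$ does not inflate the seminorm — for $s$-Hölder functions on $\R$ the extension is classical, but one should check the extension is compatible with the gaps created by jumps of $v$, where linear interpolation is the safe choice since for $0 < s \le 1$ and an interval $[a,b]$, a function agreeing with given $s$-Hölder data at $a$ and $b$ and affine in between has $s$-Hölder seminorm bounded by that of the data. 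Everything else is a direct unwinding of the definition of $TV^s$ and monotonicity of $V$.
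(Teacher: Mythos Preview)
The paper does not prove this theorem; it is quoted with attribution to Bruneau (1974) and used only to motivate the definition of multi-dimensional $BV^s$. Your approach is the classical one and is essentially correct, but two slips in the write-up should be fixed.

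First, drop the exponent $s$ in the definition of $v$: set $v(x) = V_p(x) := TV^s\bigl(u;(-\infty,x]\bigr)$ (the $p$-variation function itself, $p=1/s$), not its $s$-th root. With this choice your final ``actually more carefully'' line is exactly right: adjoining $y$ to any partition of $(-\infty,x]$ gives $|u(y)-u(x)|^{p} \le V_p(y)-V_p(x)$, hence $|u(y)-u(x)| \le |v(y)-v(x)|^{s}$ with Holder constant $1$. With the $s$-th root the intermediate identities you write do not match up, and the constant-$1$ claim would fail.

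Second, the equality $TV^s(u;[x,y]) = V_p(y) - V_p(x)$ that you invoke is \emph{false} for $p>1$: the $p$-variation is only superadditive over adjacent intervals, not additive (for a monotone $u$ the optimal partition of a subinterval is the trivial two-point one, so inserting an interior point strictly lowers the sum). Fortunately the proof never needs equality --- only $|u(y)-u(x)|^{p} \le V_p(y)-V_p(x)$, which is precisely what the adjoining argument yields. The well-definedness of $L$ on the range of $v$ and the Holder extension to $\R$ then go through as you outline.
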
 
In order to define $BV^s(\R^m)$, we recall the definition of $BV(\R^m)$ for $m\geq 1$
\begin{definition}[ $ BV(\R^m) $]
A function $u$ belongs to $ BV(\R^m) $ if there exists a Radon measure $\mu$ such that 
$$ \int_\mathds{R} u(x)\, \mathrm{div}\, \phi(x)\, \mathrm{d}x\ =\ - <\mu,\phi> \qquad \forall \phi \in \mathcal{C}^1_c(\mathds{R}^m). $$
\end{definition}

Now, the following natural definition of $BV^s(\R^m)$ is proposed for $m\geq 1$.
\begin{definition}[ $ BV^s(\R^m) $]
 A function $u$ belongs to $ BV^s(\R^m) $ if there exists  the following factorization by   an $s-$Holder function $  L \in \Lip^s(\R,\R)$ and a  $BV(\R^m,\R)$ function $v$ such that 
 $$ u =L \circ v.$$ 
\end{definition}
\noindent That means that
 \begin{equation}
 BV^s(\R^m,\R) = \Lip^s (\R,\R)\circ BV (\R^m,\R).
  \end{equation}
This definition can be extended to $BV^s_{loc}(\R^m)$ by: 
 \begin{equation}\label{def:BVs:loc}
 BV^s_{loc}(\R^m,\R) = \Lip^s (\R,\R)\circ BV_{loc} (\R^m,\R).
  \end{equation}   
Notice that the Holder function has to be globally on $\R$  an Holder function since $BV(\R^m)$ is not a subset of $L^\infty$ for $m>1$.

 This definition is valid for $m=1$ thanks to Bruneau's Theorem \ref{thm:Bruneau}.   Moreover, a $BV^s_{loc}(\R)$  1-D  function can be also considered as a $ BV^s_{loc}(\R^m) $  multi-D  function by the following lemma. 
 \begin{lemma} \label{lem:1-multi-D}  Let $\xi \in S^{m-1}$ and $U(X)= u (\xi \cdot X)$,
   $U \in BV^s_{loc}(\R^m)$ if and only if   $u \in BV^s_{loc}(\R)$
 \end{lemma}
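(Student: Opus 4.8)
The plan is to prove Lemma \ref{lem:1-multi-D} directly from the factorization definition \eqref{def:BVs:loc}, reducing everything to the analogous—and much easier—statement for ordinary $BV_{loc}$. Concretely, I will show the two implications separately, each time transporting a factorization $u = L\circ v$ through the substitution $X\mapsto \xi\cdot X$.

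\textbf{($\Leftarrow$).} Suppose $u\in BV^s_{loc}(\R)$. By definition there exist $L\in\Lip^s(\R,\R)$ and $v\in BV_{loc}(\R)$ with $u=L\circ v$. Then $U(X)=u(\xi\cdot X)=L\big(v(\xi\cdot X)\big)=L\circ V$ where $V(X):=v(\xi\cdot X)$. So it suffices to check that $V\in BV_{loc}(\R^m)$, i.e. that composing a one-dimensional $BV_{loc}$ function with the linear functional $\xi\cdot X$ stays $BV_{loc}$. This is standard: one can either argue by the slicing characterization of $BV$ (the directional derivative $\partial_\xi V$ is the pullback of the measure $Dv$ and all orthogonal directional derivatives vanish, so $DV$ is a finite vector measure on bounded sets), or simply approximate $v$ by smooth functions $v_\eps$, note $\nabla(v_\eps(\xi\cdot X)) = v_\eps'(\xi\cdot X)\,\xi$, integrate over a box aligned so that one axis is $\xi$, and use Fubini together with $|\xi|=1$ to get $\int_{K}|\nabla V_\eps| \le C_K\, \mathrm{TV}_{J}(v)$ on any bounded $K$, with $J$ the projection of $K$ onto the $\xi$-axis; pass to the limit. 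Hence $U=L\circ V\in BV^s_{loc}(\R^m)$ by \eqref{def:BVs:loc}.

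\textbf{($\Rightarrow$).} Conversely, suppose $U\in BV^s_{loc}(\R^m)$, so $U=L\circ W$ with $L\in\Lip^s$ and $W\in BV_{loc}(\R^m)$. The subtlety is that this factorization of $U$ need not be the ``plane-wave'' one: a priori $W$ need not be a function of $\xi\cdot X$ alone. The fix is to restrict to a line in the direction $\xi$. For a.e. base point $y$ in the hyperplane $\xi^\perp$, the one-dimensional slice $t\mapsto W(y+t\xi)$ lies in $BV_{loc}(\R)$ (this is exactly the $BV$ slicing theorem), with slice variations locally uniformly controlled; pick one such good $y_0$ and set $v(t):=W(y_0+t\xi)\in BV_{loc}(\R)$. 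Along that line $U(y_0+t\xi)=L(v(t))$, and by the plane-wave structure $U(y_0+t\xi)=u\big(\xi\cdot(y_0+t\xi)\big)=u(\xi\cdot y_0 + t)$. Therefore, after the harmless translation $t\mapsto t-\xi\cdot y_0$, we get $u = L\circ \tilde v$ with $\tilde v(t)=v(t+\xi\cdot y_0)\in BV_{loc}(\R)$ and $L\in\Lip^s(\R,\R)$, which is precisely $u\in BV^s_{loc}(\R)$ by Bruneau's Theorem \ref{thm:Bruneau} (used here to identify the factorization definition in 1-D with the intrinsic $BV^s_{loc}(\R)$ space).

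\textbf{Main obstacle.} The only genuinely delicate point is the ($\Rightarrow$) direction: one must extract, from an \emph{arbitrary} $BV_{loc}(\R^m)$ factor of a plane wave, a \emph{one-dimensional} $BV_{loc}$ factor. Naively ``freezing'' the transverse variables is illegal because $BV$ functions are only defined up to null sets, so a generic slice could be meaningless; the remedy is precisely the $BV$ slicing theorem, which guarantees that almost every slice is a genuine $BV_{loc}(\R)$ function and that one may choose a good slice with controlled variation. Everything else—the direct computation in ($\Leftarrow$), the use of $|\xi|=1$, and the appeal to Theorem \ref{thm:Bruneau} to translate between the factorization and the intrinsic $\mathrm{TV}^s$ definition in one dimension—is routine.
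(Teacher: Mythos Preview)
Your proof is correct and follows essentially the same route as the paper: for ($\Leftarrow$) you factor $u=L\circ v$ via Bruneau, set $V(X)=v(\xi\cdot X)$, and invoke the standard fact that $V\in BV_{loc}(\R^m)$, exactly as the paper does (citing \cite{AFP,Giusti}). For ($\Rightarrow$) the paper merely writes ``The converse is also clear''; your slicing argument is the natural way to make this rigorous---incidentally, since $y_0\in\xi^\perp$ you have $\xi\cdot y_0=0$, so your translation step is vacuous.
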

 \begin{proof}
    From  the Bruneau's Theorem 
 \ref{thm:Bruneau}, slightly extended on bounded set, $u(x) = L(v(x))$ where $v \in BV_{loc}(\R)$. Let $V(X)$ be $ v (\xi \cdot X)$.  $V$ belongs to $BV_{loc}(\R^m)$ \cite{AFP,Giusti}. Thus  $U(X)= L(V(X))$ belongs to
     $BV^s_{loc}(\R^m,\R) $. The converse is also clear.
 \end{proof}

\section{Sharp regularity  for scalar  1D entropy solutions}  \label{sec:1D}

\quad In this section, we will build some examples to show the optimality of  smoothing effect in $BV^s$  for all time.  This regularity has been obtained in  \cite{BGJ6, GJC,CJJ,EM, EMproc}. The optimality for all time is new. For that purpose, we consider the flux $f(u)=|u|^{p+1}/(p+1)$ so $f'(u)=u|u|^{p-1}$. It is shown  that for $u_0 \in L^\infty$, the solution becomes instantly in $BV^s_{loc}$, with $s=p^{-1}$. Theorem \ref{thm1} stated below shows that the regularizing in $BV^s$ space is optimal for all time since   there exist entropy solutions $u$ such that $ u(\cdot,t) \notin BV^{s+\varepsilon}$ for all $ \varepsilon>0$ and for all $t>0$. The construction of this example is similar to the one done in \cite{AGG} to show infinitely many shock profile occurrence in compact interval. Similar construction has been also used in \cite{AGV} to show the existence of an entropy solution which does not belong to $BV$ for all time. Here we use it to show the existence of an entropy solution which is exactly  in $BV^s$ with $s=p^{-1}$ for all time $t>0$ with no more regularity.

\begin{theorem} \label{thm1}
There exists compactly supported   initial data $u_0\in L^\f(\R)$ such that the corresponding  entropy solution $u(\cdot,t)\in L^\f(\R\times [0,\f))$ of the scalar conservation law  (\ref{eqn1}) with the flux  $f(u)=|u|^{p+1}/(p+1)$, $p\geq 1$ satisfies   $ \mbox{ for all } t>0, \mbox{ for all } \varepsilon>0$  with $s=p^{-1}$, 
$$TV^{s }u(\cdot,t) <  + \f   = TV^{s+\varepsilon}u(\cdot,t).
 $$
\end{theorem}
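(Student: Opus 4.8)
The plan is to build the entropy solution as a countable juxtaposition of elementary building blocks, each being an \emph{ASSP} (asymptotically single shock packet) supported in a half-strip $[a_n,b_n]\times\R_+$, chosen so that each block already forces a controlled contribution to $TV^{s+\e}$ that diverges as a geometric-type series, while its contribution to $TV^s$ remains summable. Concretely, I would first recall (or re-derive from the Lax--Ole\u{\i}nik formula for the flux $f(u)=|u|^{p+1}/(p+1)$) the structure of a single ASSP: starting from an initial datum consisting of a sawtooth with $k_n$ teeth of height $\de_n$ on an interval of length $\ell_n$, the entropy solution develops $k_n$ shocks which merge pairwise, and up to a time $t_n$ the profile $u(\cdot,t)$ still contains on the order of $m_n(t)$ jumps each of size comparable to $\de_n$. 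The first step is therefore to fix the scaling relations between $\de_n$, $\ell_n$, $k_n$, $t_n$ and the separation of the successive strips $[a_n,b_n]$, so that: (i) the blocks have disjoint spatial supports, $b_n<a_{n+1}$, with $\bigcup_n[a_n,b_n]$ bounded (hence $u_0$ compactly supported); (ii) the characteristic/rarefaction information leaving one block never interferes with the next, so the global entropy solution is literally the superposition of the block solutions; (iii) $u_0\in L^\infty$, i.e.\ $\sup_n\de_n<\infty$ — indeed one takes $\de_n\to 0$.

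The second step is the lower bound. For each fixed $t>0$ there is an index $N(t)$ such that for all $n\ge N(t)$ the time $t$ lies in the active window of block $n$, i.e.\ $u(\cdot,t)$ restricted to $[a_n,b_n]$ still displays at least, say, $\tfrac12 m_n$ distinct jumps of size $\gtrsim \de_n$ (here $m_n$ is the number of surviving shocks at the relevant internal time, which by the merging dynamics of $f'(u)=u|u|^{p-1}$ decays like a known power of the elapsed time but stays positive on $[0,t_n]$ for $t_n$ chosen large). Then picking a subdivision of $[a_n,b_n]$ that resolves these jumps gives
\[
TV^{s+\e}\big(u(\cdot,t)\big)\ \ge\ \sum_{n\ge N(t)} c\, m_n\, \de_n^{1/(s+\e)}.
\]
The scaling is then chosen so that $m_n\,\de_n^{1/(s+\e)}\to\infty$ (or at least does not tend to $0$) for every $\e>0$: since $1/(s+\e)<1/s=p$, a datum that is borderline for the exponent $p$ becomes super-critical for any smaller exponent, and the series diverges. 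This is the step where the exponent $s=1/p$ enters essentially.

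The third step is the matching upper bound $TV^s(u(\cdot,t))<\infty$. For this I would invoke the $BV^s$ regularizing effect already available for convex $p$-degenerate fluxes (the flux $f(u)=|u|^{p+1}/(p+1)$ satisfies \eqref{flux-condition} with exactly this $p$), giving a bound of the form $TV^s(u(\cdot,t))\le C(t)\,\|u_0\|_\infty^{?}\,|\mathrm{supp}\,u_0|^{?}$ on each strip, and then sum: because the supports are disjoint and $u$ vanishes between them, $TV^s$ of the whole solution is the sum of the block contributions, $TV^s(u(\cdot,t))=\sum_n TV^s(u(\cdot,t)|_{[a_n,b_n]})$, and one checks the scaling makes this summable (here $m_n\de_n^{p}$ is summable by construction, consistent with the known local $BV^s$ bound). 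Finally I would assemble the pieces: define $u_0$ as the concatenation of the block initial data, note $u_0\in L^\infty$ with compact support, invoke uniqueness of the entropy solution together with finite speed of propagation to identify $u$ with the block-by-block solution, and conclude $TV^s u(\cdot,t)<\infty=TV^{s+\e}u(\cdot,t)$ for every $t>0$, $\e>0$.

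The main obstacle I anticipate is step two's quantitative control of the merging dynamics: one must verify that for the power-law flux the number of surviving shocks in block $n$ at real time $t$ stays large enough — i.e.\ pin down the decay rate of $m_n(\cdot)$ and the persistence of jump heights $\gtrsim\de_n$ under the nonlinear interactions — uniformly across the infinitely many blocks, and simultaneously ensure (via the speed bounds, which depend on $\|u_0\|_\infty$) that blocks stay decoupled. Balancing "enough surviving shocks to blow up $TV^{s+\e}$" against "few enough / small enough to keep $TV^s$ finite and supports bounded" is the delicate scaling computation at the heart of the construction; the explicit Lax--Ole\u{\i}nik formula for $f'(u)=u|u|^{p-1}$ makes it tractable, and the ASSP machinery of \cite{AGG,AGV} provides the template.
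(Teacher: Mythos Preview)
Your overall architecture --- disjoint blocks, finite-speed decoupling, a lower bound from the shocks inside each block, and the upper bound from the known $BV^s$ regularizing effect for fluxes satisfying \eqref{flux-condition} --- is exactly the paper's strategy. The divergence from the paper is in the building block itself, and this is where you have created for yourself the very obstacle you flag at the end.

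The paper does \emph{not} use a sawtooth with $k_n$ teeth and then track the surviving number $m_n(t)$ of shocks under merging. Each block carries a \emph{single} shock: the initial datum on $[x_n-\Delta x_n,\,x_n+\Delta x_n]$ is simply
\[
u_0^n = 0,\ +\delta_n,\ -\delta_n,\ 0,
\]
producing one standing shock of size $2\delta_n$ at $x=x_n$ flanked by two rarefactions emanating from $x_n\mp\Delta x_n$. For the flux $f(u)=|u|^{p+1}/(p+1)$ the rarefactions reach the shock exactly at time $t_n = \Delta x_n/\delta_n^{\,p}$; before $t_n$ the jump keeps its full height $2\delta_n$, after $t_n$ it decays. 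No merging analysis whatsoever is needed --- the whole ``quantitative control of the merging dynamics'' disappears. The key scaling insight is that the divergence in $TV^{s+\e}$ comes not from many shocks within one block, but from one shock in each of \emph{infinitely many} blocks whose lifetimes $t_n\to\infty$. Concretely the paper takes
\[
\Delta x_n=\frac{1}{n\log^2(n+1)},\qquad \delta_n=\Big(\frac{1}{n\log^3(n+1)}\Big)^{1/p},
\]
so that $t_n=\log(n+1)\to\infty$, $\sum_n\Delta x_n<\infty$ (compact support), and for any fixed $t>0$ there is $n_0$ with $t<t_n$ for all $n\ge n_0$, giving
\[
TV^{s+\e}u(\cdot,t)\ \ge\ \sum_{n\ge n_0}(2\delta_n)^{1/(s+\e)}
=\ 2^{1/(s+\e)}\sum_{n\ge n_0}\frac{1}{(n\log^3(n+1))^{1/(1+p\e)}}=\infty.
\]
The finiteness of $TV^s u(\cdot,t)$ is exactly your third step, via the regularizing effect.

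Your sawtooth variant may be salvageable, but it buys nothing and forces you to control shock--rarefaction interactions inside each block; the paper's single-shock block renders that moot. Replace your $k_n$ teeth by one tooth and the proof closes immediately.
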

 Theorem \ref{thm1} can be seen as a particular case (that is, $\al\equiv0$) of the following result stated in context of balance laws.

\begin{theorem}\label{theorem:balance_laws}
	There exists an initial data $u_0\in L^\f(\re)$ such that the corresponding entropy solution to balance law \eqref{balance_laws} with flux $f(u)=(1+p)^{-1}\abs{u}^{1+p}$ for $p>1$ and $\al\in L^\f(0,\f)$ satisfies the following with $s=p^{-1}$
	\begin{equation}\label{eqn:inTheorem2}
	TV^s(u(\cdot,t))<\f=TV^{s+\e}(u(\cdot,t))\mbox{ for all }t>0\mbox{ and }\e>0.
	\end{equation}	
\end{theorem}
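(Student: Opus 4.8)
The strategy is to build the solution as a juxtaposition (in space) of infinitely many building blocks, each of which is an Asymptotically Single Shock Packet (ASSP) for the balance law \eqref{balance_laws}, supported in a half-strip $[a_n,b_n]\times\R_+$ whose spatial widths $b_n-a_n$ are summable so that the total support is compact. Each block is designed so that, at every time $t>0$, its profile still contains a large (but shrinking in $n$) number of small jumps, arranged so that contributing them to $TV^{s+\e}$ diverges while contributing to $TV^s$ stays summable. Concretely, for the pure conservation law ($\al\equiv 0$) the $n$-th block will be a rarefaction-compression pattern: an initial datum consisting of $N_n$ oscillations of amplitude $\de_n$ on an interval of length $\ell_n$, evolving under $f(u)=|u|^{p+1}/(p+1)$ so that each ``up'' part spreads into a Lax rarefaction and each ``down'' part collapses to a shock. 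The homogeneity $f'(u)=u|u|^{p-1}$ gives the self-similar scaling that lets us control exactly when the $n$-th packet has merged: before the merging time $t_n$ the packet retains $\gtrsim N_n$ jumps of size $\sim \de_n$; after $t_n$ it looks like a single ASSP shock, but crucially we still choose $\de_n, N_n, t_n$ so that the \emph{later} packets (indices $>n$) keep the $BV^{s+\e}$ norm infinite for all $t$. The point of the construction in \cite{AGG,AGV} that we reuse is precisely that one can prescribe the widths, amplitudes and interaction times of such packets freely.

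**Reduction to the balance law via the Lax--Ole\u{\i}nik formula of \cite{ASG}.** For Theorem \ref{theorem:balance_laws} the only extra ingredient is that the source term $\al(t)u$ is linear, so the change of unknown $u(x,t)=E(t)\,w(y(x,t),\tau(t))$ with $E(t)=\exp\!\big(\int_0^t\al\big)$ (together with a rescaling of the $x$-variable dictated by the $(p+1)$-homogeneity of $f$) transforms \eqref{balance_laws} into a pure conservation law $\partial_\tau w+\partial_y(f(w)/(1+p))=0$ of the same type, up to an explicit time reparametrization $\tau(t)$ and an explicit bounded, bounded-away-from-zero factor $E(t)$. Since $E(t)\in[c,C]$ with $0<c\le C<\infty$ on every compact time interval (because $\al\in L^\infty(0,\infty)$), multiplication by $E(t)$ changes $TV^s$ and $TV^{s+\e}$ only by bounded factors, hence preserves finiteness/infiniteness; likewise the smooth monotone change of spatial variable does. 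So it suffices to carry out the construction for $\al\equiv0$ (i.e.\ prove Theorem \ref{thm1}) and then pull it back. The boundaries of the ASSP's, straight lines in the conservation-law picture, become the images of these lines under $(y,\tau)\mapsto(x,t)$, i.e.\ curves, exactly as announced in the introduction. The recent Lax--Ole\u{\i}nik formula of \cite{ASG} is what legitimises this change of variables at the level of entropy solutions and gives the explicit structure of each transformed ASSP.

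**The core quantitative estimate.** The heart is choosing the sequences $(\ell_n), (\de_n), (N_n), (t_n)$ so that three requirements hold simultaneously: (i) $\sum_n \ell_n<\infty$ (compact support) and the amplitudes $\de_n\to0$ are such that the global solution is a well-defined entropy solution in $L^\infty$, which follows from gluing entropy solutions across non-interacting strips; (ii) for every fixed $t>0$, $TV^s(u(\cdot,t))<\infty$: each packet contributes at most $\sim N_n\,\de_n^{1/s}=N_n\de_n^{p}$ to $TV^s$, plus an $O(\de_n^{p})$ term from the single merged shock, and we arrange $\sum_n N_n\de_n^{p}<\infty$; this is consistent with the known $BV^s$ regularizing effect \cite{BGJ6}; (iii) for every fixed $t>0$ and every $\e>0$, infinitely many packets (all those with $t_n>t$, and there are infinitely many since we let $t_n\to\infty$) still carry $\sim N_n$ unmerged jumps of size comparable to $\de_n$, so they contribute $\sim N_n\,\de_n^{1/(s+\e)}$ to $TV^{s+\e}$; since $1/(s+\e)<1/s=p$ we can pick $N_n,\de_n$ with $N_n\de_n^{p}$ summable but $N_n\de_n^{1/(s+\e)}$ not summable for \emph{every} $\e>0$ (e.g.\ $\de_n$ like a fixed negative power of $n$ and $N_n$ a suitable power of $n$; the gap between the exponents $p$ and $1/(s+\e)$ makes this possible uniformly in $\e$ by a diagonal choice). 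The self-similar scaling of $f'(u)=u|u|^{p-1}$ is what pins down $t_n$ in terms of $\ell_n,\de_n$ and guarantees the packets do not interact with each other, so the estimates decouple across $n$.

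**Main obstacle.** The delicate point is requirement (iii) combined with $t_n\to\infty$: I must ensure that \emph{after} the $n$-th packet has internally merged into its single ASSP shock, it nonetheless does not destroy the infinite $TV^{s+\e}$ coming from the still-active later packets, and that it has not leaked oscillation into a neighbouring strip. This is exactly the ASSP confinement property --- the solution stays trapped between the two (curved, in the balance-law case) boundary lines --- and proving it rigorously for the source-term case is where \cite{ASG} is essential. The companion subtlety is the uniformity in $\e$: a single sequence $(N_n,\de_n)$ must make $\sum N_n\de_n^{1/(s+\e)}=\infty$ for all $\e>0$ at once; this forces $N_n\de_n^{p}$ to decay only polynomially (not exponentially), which in turn constrains how fast $t_n$ may grow, and one has to check the compact-support and entropy-admissibility conditions survive this slow decay. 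Everything else --- the gluing of entropy solutions on disjoint strips, the explicit Burgers-type ($p$-homogeneous) rarefaction/shock formulas inside one block, and the bookkeeping of $TV^s$ versus $TV^{s+\e}$ --- is routine once the parameters are fixed.
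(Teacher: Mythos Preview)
Your proposal is essentially correct but takes a genuinely different route from the paper in two respects, and it carries some unnecessary baggage.

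\textbf{The reduction.} You propose to pass from the balance law to the pure conservation law via the substitution $u(x,t)=E(t)\,v(x,\tau(t))$ with $E(t)=e^{\beta(t)}$, exploiting the $(p+1)$-homogeneity of $f$. This does work: one gets $v_\tau+f(v)_x=0$ with $\tau(t)=\int_0^t e^{p\beta(s)}\,ds=\gamma(t)$, and no rescaling of $x$ is needed (contrary to what you wrote). The paper does \emph{not} do this. Instead it works directly with the balance law through the function $\Psi$ of Proposition~\ref{prop1}, computes the explicit form $\Psi(x,t)=x|x|^{-(p-1)/p}\gamma(t)^{-1/p}$ for the power-law flux, and tracks the interaction time $t_n$ via $\gamma(t_n)=\Delta x_n/\delta_n^p$. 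Your reduction buys you the $TV^s$ finiteness for free from \cite{BGJ6}; the paper instead proves a separate $BV^s$ regularizing result for the balance law (Theorem~\ref{theorem:positive}) and invokes it. One caveat you should address explicitly: $\gamma(+\infty)$ can be finite, so $\tau$ ranges only over $(0,B^*)$; this is harmless since every physical $t$ corresponds to some $\tau(t)<B^*$, but it is exactly the phenomenon behind the paper's observations \ref{O1}--\ref{O2}.

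\textbf{The building blocks.} You propose $N_n$ oscillations of amplitude $\delta_n$ in the $n$-th packet. The paper uses $N_n=1$: a single step $+\delta_n/-\delta_n$ on $(x_n-\Delta x_n,x_n+\Delta x_n)$, giving one shock of size $2\delta_n$ that survives until $t_n$. This already suffices: with $\Delta x_n=(n\log^2(n+1))^{-1}$ and $\delta_n^p=(n\log^3(n+1))^{-1}$ one has $\sum\delta_n^p<\infty$, $\gamma(t_n)=\log(n+1)\to\infty$, and $\sum\delta_n^{p/(1+p\e)}=\sum (n\log^3(n+1))^{-1/(1+p\e)}=\infty$ for \emph{every} $\e>0$. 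So no ``diagonal choice'' is needed for the uniformity in $\e$; a single explicit sequence handles all $\e$ at once. Your $N_n>1$ is correct but superfluous, and it forces you to additionally ensure that the $N_n$ sub-oscillations inside a packet do not interact with each other before time $t_n$, which adds a constraint $\ell_n/(N_n\delta_n^p)\to\infty$ you did not write down.

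Both approaches rely on the same crucial ingredient you correctly identified: the ASSP confinement (the $n$-th packet stays inside its strip for all time), which for the balance law comes from \cite{ASG}.
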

{Theorem \ref{theorem:balance_laws} also states about the regularizing of entropy solution corresponding to a particular initial data $u_0$ and flux $f(u)=(p+1)^{-1}\abs{u}^{p+1}$. Next we will show that it is not restricted to a special choice of data and flux. If a flux satisfies a $p$-degeneracy condition like \eqref{flux-condition} then regularizing is valid for any $L^{\f}$ initial data. More precisely, we have the following}
\begin{theorem}\label{theorem:positive}
	Let $f\in C^1(\re)$ be a convex flux satisfying a power-law condition \eqref{flux-condition} and super linear growth condition \eqref{eqn:super_linear}. Let $\al\in L^{\f}(0,\f)$. Let $u_0 \in L^\infty(\mathbb{R})$. Let $u$  be the entropy solution  of the initial value problem for the balance law 
	\eqref{balance_laws},  with the initial data $u_0$  \eqref{balance_laws0}, then 
	\begin{equation}
	u(\cdot,t)\in BV^s_{loc}(\re)\mbox{ for }s=\frac{1}{p}\mbox{ and } \forall \, t>0.
	\end{equation}	
\end{theorem}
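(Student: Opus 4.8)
The plan is to transfer the known $BV^s_{loc}$ smoothing effect for the conservation law $\eqref{eqn1}$ (with a convex flux satisfying the $p$-degeneracy power-law condition $\eqref{flux-condition}$) to the balance law $\eqref{balance_laws}$ with the linear source $\alpha(t)u$, by means of the change of unknown that removes the source term. First I would set $A(t) = \int_0^t \alpha(\tau)\,d\tau$, which is well defined, Lipschitz in $t$, and bounded on every compact time interval since $\alpha \in L^\infty(0,\infty)$; write $\lambda(t) = e^{A(t)}$, so that $0 < c_T \le \lambda(t) \le C_T$ on $[0,T]$. The substitution $u(x,t) = \lambda(t)\, w(y,t)$, together with an appropriate rescaling of the space variable dictated by the homogeneity of the flux, should turn $\eqref{balance_laws}$ into a conservation law of the form $\partial_t w + \partial_y g(w,t) = 0$ with a time-dependent but uniformly (in $t$, on $[0,T]$) $p$-degenerate convex flux $g(\cdot,t)$; alternatively, following the Lax--Ole\u{\i}nik type formula of \cite{ASG} for $\eqref{balance_laws}$, one reads off directly a representation of $u(\cdot,t)$ from which the generalized characteristics and the monotonicity structure are visible.

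The key steps, in order: (i) use the Lax--Ole\u{\i}nik formula from \cite{ASG} (or the change of variable above) to express $u(\cdot,t)$ on a bounded $x$-interval in terms of a function of the form $\Phi(t,\cdot)$ composed with a monotone (hence $BV_{loc}$) profile, exploiting the convexity; (ii) check that the $p$-degeneracy condition $\eqref{flux-condition}$ is preserved (up to constants depending on $T$ and $\|\alpha\|_\infty$) under the transformation, so that the one-sided Ole\u{\i}nik-type inequality on $f'$ — which is what actually produces the $BV^s$ bound — still holds; (iii) invoke the already established $BV^s_{loc}$ regularizing effect for the homogeneous equation \cite{BGJ6,GJC,CJJ,EM,EMproc} applied to the transformed solution at each fixed $t \le T$, getting $w(\cdot,t) \in BV^s_{loc}$ with $s = 1/p$; (iv) transfer back: since multiplication by the bounded-below-and-above factor $\lambda(t)$ and a bi-Lipschitz rescaling of $x$ preserve $BV^s_{loc}$ (indeed $TV^s$ scales by a power of the Lipschitz constants, using that $x\mapsto cx$ and $u\mapsto cu$ act boundedly on $TV^s$ for fixed $t$), conclude $u(\cdot,t) \in BV^s_{loc}(\R)$; (v) let $T$ be arbitrary to cover all $t > 0$.

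The main obstacle I expect is step (ii) together with making step (i) rigorous: the source term $\alpha(t)u$ interacts nonlinearly with the flux because $f$ is only homogeneous in the model case $f(u) = |u|^{p+1}/(p+1)$, not for a general convex $f$ satisfying $\eqref{flux-condition}$, so the naive substitution does not exactly produce a conservation law and one genuinely needs the Lax--Ole\u{\i}nik representation of \cite{ASG} and a careful check that the effective flux inherits a uniform-in-$t$ $p$-degeneracy on the relevant bounded range $[-M_T, M_T]$ of values (with $M_T$ controlled by $\|u_0\|_\infty e^{\|\alpha\|_\infty T}$). Once the correct one-sided bound on the generalized characteristic speeds is in hand, the passage to $BV^s$ is the same argument as in the homogeneous case: a one-sided Hölder-type modulus on the inverse characteristic map yields, via the convexity and $\eqref{flux-condition}$, a factorization $u(\cdot,t) = L \circ (\text{monotone})$ with $L \in \Lip^s$, which by Bruneau's Theorem \ref{thm:Bruneau} is exactly membership in $BV^s_{loc}$.
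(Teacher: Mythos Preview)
Your final paragraph is essentially the paper's argument, but the earlier scaffolding contains dead ends and one concrete imprecision that matters.

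The detours first. The substitution $u=\lambda(t)w$ does not reduce \eqref{balance_laws} to a scalar conservation law unless $f$ is positively homogeneous, as you yourself note; so step (iii), ``invoke the already established $BV^s_{loc}$ regularizing effect for the homogeneous equation applied to the transformed solution'', has no transformed solution to be applied to. There is no detour through \cite{BGJ6}: the paper re-proves the $BV^s$ bound from scratch, directly for the balance law, using only the representation from \cite{ASG}.

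What the paper actually does is exactly your last sentence, made precise. From Proposition~\ref{prop1} one has
\[
u(x,t)=e^{\beta(t)}\,\Psi\bigl(x-y(x,t),\,t\bigr),
\]
with $x\mapsto y(x,t)$ nondecreasing and $|x-y(x,t)|\le C(T)t$. The paper then proves, as a short lemma, that the power-law condition \eqref{flux-condition} forces $\Psi(\cdot,t)\in \Lip^{s}$ with explicit constant $(c_0\gamma(t))^{-s}$, where $\gamma(t)=\int_0^t e^{p\beta(\theta)}\,d\theta$: this is the ``Hölder-type modulus'' you allude to, and it comes straight from the defining identity $z=\int_0^t f'(\Psi(z,t)e^{\beta(\theta)})\,d\theta$ together with \eqref{flux-condition}. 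With this Lipschitz-$s$ bound in hand, the $TV^s$ estimate on $[a,b]$ is a two-line computation.

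The imprecision in your outline is the word ``monotone''. The inner profile is $x\mapsto x-y(x,t)$, which is the \emph{difference} of two nondecreasing functions and hence merely $BV_{loc}$, not monotone. This is exactly what Bruneau's Theorem~\ref{thm:Bruneau} requires, so your conclusion is correct; but the paper exploits this structure in the most direct way by splitting
\[
\sum_k\bigl|(x_k-y(x_k,t))-(x_{k-1}-y(x_{k-1},t))\bigr|\le (b-a)+\bigl(y(b,t)-y(a,t)\bigr),
\]
with the second term controlled by the finite-speed bound \eqref{finite_speed}. No appeal to the homogeneous theory or to any change of variable is needed.
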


As we have discussed before for $\al\equiv0$ case, that for entropy solutions to \eqref{eqn1}, uniformly convex flux regularizes the solution in $BV$ space \cite{Lax,O57} and it fails once we drop the uniform convexity assumption  on flux function \cite{AGV,Cheng83}. As a natural extension, one can ask for the regularizing effect for strictly convex fluxes and it has been shown in \cite{BGJ6} that regularizing is valid in fractional $BV$ space once the flux satisfying a $p$-degeneracy condition  \eqref{flux-condition}. {For strictly convex Lipschitz flux, regularizing effect can be obtained in more general spaces like $BV^{\Phi}$ with a special choice of $\Phi$, \cite{GJC}}.
To prove $TV^{s+\e}(u(\cdot,t))=\f$ for all $t>0$ we construct an entropy solution consisting \textit{ASSP}'s (see \cite{AGG} for more detail on \textit{ASSP}). The other part, that is, $u\in BV^s$ follows from \cite{BGJ6} in the case of Theorem \ref{thm1} that is, when $u$ solves \eqref{eqn1}. But for balance law of type \eqref{balance_laws} no such result exists. It can be proved in a similar fashion as it was done in \cite{BGJ6} for conservation laws. We first give a brief sketch of the proof for $u\in BV^s$ where $u$ is the entropy solution to balance law \eqref{balance_laws}. In order to do that let us first recall some of the definitions and results from \cite{ASG}.
\begin{definition}
	Let $\al\in L^{\f}(0,\f)$. Let $\B$ be primitive of $\al$, that is,
	\begin{equation}\label{defn:beta}
	\B(t)=\int\limits_{0}^{t}\al(\theta)\,d\theta.
	\end{equation}
	Suppose that the flux $f$ is having super-linear growth, that is,
	\begin{equation}\label{eqn:super_linear}
	\lim\limits_{\abs{v}\rr\f}\frac{f(v)}{\abs{v}}=\f.
	\end{equation}
	We define $\Psi:\R\times\R_+\rr\R$-function as follows:
	\begin{equation}\label{defn:Psi}
	x=\int\limits_{0}^{t}f^{\p} \left (\Psi(x,t)e^{\B(\theta)}\right)\,d\theta\,\mbox{ for each }x\in\re.
	\end{equation}
\end{definition} 
Note that $\Psi$ in \eqref{defn:Psi} is well-defined on $\R\times\R_+$ due to super-linear growth \eqref{eqn:super_linear} of $f$ (see \cite{ASG}). For $\al\equiv0$ and strictly convex $C^1$ flux $f$, the $\Psi$-function is nothing but $(f^{\p})^{-1}(x/t)$. As it is observed in \cite{ASG}, the map $x\mapsto \Psi(x,t)$ is increasing for strictly convex flux $f$.
\begin{proposition}(\cite{ASG})\label{prop1}
	Let $\al\in L^\f(\re_+)$ and the flux $f\in C^1(\re)$ satisfying \eqref{eqn:super_linear}. Let $u$ be the entropy solution to \eqref{balance_laws} with initial data $u_0\in L^\f(\re)$. Then  $u$ satisfies
	\begin{equation}
	u(x,t)=e^{\B(t)}\Psi(x-y(x,t),t)\mbox{ for all }x\in\re,t>0
	\end{equation}
	for some function $y$ such that $x\mapsto y(x,t)$ is non-decreasing and $\B$ is defined as in \eqref{defn:beta}. Moreover, for each $T>0$ there exists a constant $C(T)$ such that
	\begin{equation}\label{finite_speed}
	\abs{x-y(x,t)}\leq C(T)t\mbox{ for all }t\in[0,T].
	\end{equation}
	
\end{proposition}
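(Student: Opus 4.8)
The representation to be established is the Lax--Ole\u{\i}nik formula for the balance law \eqref{balance_laws}, and the natural strategy is to remove the source term by an exponential rescaling and then run the method of (generalized backward) characteristics. First I would set $v(x,t)=e^{-\B(t)}u(x,t)$. Using $\B'=\al$, a direct computation turns \eqref{balance_laws} into the source-free quasilinear equation
$$\dt v + f'\!\left(e^{\B(t)}v\right)\dx v = 0 ,$$
which is a scalar conservation law with the time-dependent flux $F(v,t)=e^{-\B(t)}\big(f(e^{\B(t)}v)-f(0)\big)$, since $F_v(v,t)=f'(e^{\B(t)}v)$; it is strictly convex in $v$ exactly when $f$ is, which is the setting in which $\Psi$ from \eqref{defn:Psi} is well defined and $x\mapsto\Psi(x,t)$ increasing. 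Because $u\mapsto v$ is, for each fixed $t$, an increasing affine rescaling, Kru\v{z}kov entropy admissibility is preserved, so $v$ is the entropy solution with data $v(\cdot,0)=u_0$.

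Next I would use characteristics for $v$. Along a backward characteristic issued from $(x,t)$ the value of $v$ is constant, say $v\equiv\Psi_0$, and the characteristic solves $\dot x(\theta)=f'(e^{\B(\theta)}\Psi_0)$; integrating from the foot $y$ on $\{t=0\}$ gives
$$x-y=\int_0^t f'\!\left(\Psi_0\, e^{\B(\theta)}\right)d\theta .$$
This is precisely the defining relation \eqref{defn:Psi} for $\Psi(\cdot,t)$ evaluated at $x-y$, whence $\Psi_0=\Psi(x-y,t)$ and therefore $u(x,t)=e^{\B(t)}v(x,t)=e^{\B(t)}\Psi(x-y(x,t),t)$, with $y(x,t)$ the foot of the minimal backward characteristic. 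The monotonicity of $x\mapsto y(x,t)$ is the no-crossing property of entropy backward characteristics for convex fluxes (Lax--Ole\u{\i}nik when $\al\equiv0$, its generalized-characteristic extension for the time-dependent flux $F$): if $x_1<x_2$, the minimal backward characteristics cannot cross, forcing $y(x_1,t)\le y(x_2,t)$.

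For the finite-speed bound \eqref{finite_speed} I would argue by the maximum principle. Constants solve the reduced equation, so the comparison principle yields $\norminf{v(\cdot,t)}\le\norminf{v(\cdot,0)}=\norminf{u_0}$. Hence for $t\in[0,T]$ the argument $\Psi_0\,e^{\B(\theta)}=v(x,t)\,e^{\B(\theta)}$ stays in the fixed compact set $[-R,R]$ with $R=e^{\sup_{[0,T]}\B}\,\norminf{u_0}$, on which the continuous function $f'$ is bounded by some constant $C(T)$. Substituting into the identity for $x-y$ above gives $\abs{x-y(x,t)}\le C(T)\,t$ for all $t\in[0,T]$, which is exactly \eqref{finite_speed}.

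The one genuinely delicate point, and the main obstacle, is to make this characteristic picture rigorous for merely $L^\f$ data and a time-dependent flux: one must establish, for a.e. $x$, the existence and uniqueness of the foot $y(x,t)$ of the minimal backward characteristic, its monotonicity in $x$, and the a.e. validity of the pointwise formula. For $\al\equiv0$ this is the classical Lax--Ole\u{\i}nik theory; for general $\al\in L^\f$ one replaces straight characteristics by the curves $\theta\mapsto\int_0^{\theta}f'(\Psi_0\,e^{\B})$ and carries out the generalized-characteristic analysis, which is the content of \cite{ASG}. Everything else reduces to the well-posedness of $\Psi$ in \eqref{defn:Psi} (guaranteed by the super-linear growth \eqref{eqn:super_linear}) together with the elementary bounds above.
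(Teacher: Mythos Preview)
The paper does not give its own proof of this proposition: it is quoted as a result from \cite{ASG} and used as a black box thereafter. So there is no argument in the paper to compare against.

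That said, your outline is the right one and matches the spirit of the Lax--Ole\u{\i}nik formula developed in \cite{ASG}. The exponential change of unknown $v=e^{-\B(t)}u$ does remove the source term and produces the conservation law $\dt v+\dx F(v,t)=0$ with $F_v(v,t)=f'(e^{\B(t)}v)$; along backward characteristics $v$ is constant, integrating the characteristic ODE reproduces exactly the defining relation \eqref{defn:Psi} for $\Psi$, which gives the representation $u=e^{\B(t)}\Psi(x-y(x,t),t)$. Your justification of \eqref{finite_speed} via the maximum principle on $v$ and the boundedness of $f'$ on a compact interval is also the correct mechanism. You have likewise correctly isolated the only nontrivial ingredient: for $L^\infty$ data and a time-dependent convex flux one must prove existence and monotonicity of the foot $y(x,t)$ of the minimal backward characteristic (equivalently, of the minimizer in the associated value function). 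This is precisely what \cite{ASG} supplies, and the present paper simply imports that result rather than reproving it.
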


Since $\Psi$ is increasing in its first variable we have the following lemma.
\begin{lemma}\label{lemma:est1}
	Let $f\in C^1$ be a convex flux satisfying the super linear growth condition \eqref{eqn:super_linear} and power-law condition \eqref{flux-condition} with $p\geq1$. Then for any $z_1,z_2\in\re$ we have with $s=p^{-1}$,
	\begin{equation}\label{ineq:lemma1}
	\abs{\Psi(z_1,t)-\Psi(z_2,t)}\leq 
	 \left(\frac{ \abs{z_1-z_2}}{ c_0\gamma(t)}  \right)^{s}, 
	\end{equation}
	with
	\begin{equation}\label{defn:gamma}
	\gamma(t):= \dip\int\limits_{0}^{t}e^{p\B(\theta)}\,d\theta\mbox{ where $\B$ is defined as in \eqref{defn:beta}.}
	\end{equation}
\end{lemma}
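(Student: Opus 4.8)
The plan is to reduce the estimate to a pointwise inequality about the inverse of $f'$, exploiting the defining relation \eqref{defn:Psi} for $\Psi$. First I would fix $t>0$ and write $a_i = \Psi(z_i,t)$ for $i=1,2$; without loss of generality assume $z_1 \geq z_2$, so that (by monotonicity of $\Psi$ in its first variable, recalled just before the lemma) $a_1 \geq a_2$. Subtracting the two instances of \eqref{defn:Psi} gives
\begin{equation}\label{eq:plan-diff}
z_1 - z_2 \ =\ \int_0^t \Bigl[\, f'\!\bigl(a_1 e^{\B(\theta)}\bigr) - f'\!\bigl(a_2 e^{\B(\theta)}\bigr)\,\Bigr]\,d\theta .
\end{equation}
Now I would apply the power-law degeneracy condition \eqref{flux-condition} to the pair $u = a_1 e^{\B(\theta)}$, $v = a_2 e^{\B(\theta)}$ (both lie in the relevant compact interval $[-M,M]$ since the solution is bounded), which yields
\begin{equation}\label{eq:plan-pointwise}
f'\!\bigl(a_1 e^{\B(\theta)}\bigr) - f'\!\bigl(a_2 e^{\B(\theta)}\bigr) \ \geq\ c_0\, \bigl|a_1 e^{\B(\theta)} - a_2 e^{\B(\theta)}\bigr|^{p} \ =\ c_0\, e^{p\B(\theta)}\, |a_1 - a_2|^{p},
\end{equation}
using convexity of $f$ to see that the left side is nonnegative so that the absolute value in \eqref{flux-condition} can be dropped with the ordering $a_1 \geq a_2$.

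Inserting \eqref{eq:plan-pointwise} into \eqref{eq:plan-diff} and pulling the $\theta$-independent factor $|a_1-a_2|^p$ out of the integral gives
\begin{equation}\label{eq:plan-integrated}
z_1 - z_2 \ \geq\ c_0\, |a_1 - a_2|^{p} \int_0^t e^{p\B(\theta)}\,d\theta \ =\ c_0\,\gamma(t)\, |a_1-a_2|^p ,
\end{equation}
with $\gamma(t)$ as in \eqref{defn:gamma}. Solving for $|a_1 - a_2|$ and recalling $s = 1/p$ yields
\begin{equation}\label{eq:plan-conclusion}
|\Psi(z_1,t) - \Psi(z_2,t)| \ =\ |a_1 - a_2| \ \leq\ \left(\frac{z_1 - z_2}{c_0\,\gamma(t)}\right)^{s} \ =\ \left(\frac{|z_1-z_2|}{c_0\,\gamma(t)}\right)^{s},
\end{equation}
which is exactly \eqref{ineq:lemma1}; the symmetric case $z_2 \geq z_1$ is identical by swapping indices.

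I do not expect any serious obstacle here — the argument is essentially a one-line manipulation once \eqref{defn:Psi} is differenced. The only points deserving a word of care are: (i) checking that $\gamma(t) > 0$ so the division in \eqref{eq:plan-conclusion} is legitimate (immediate, since $e^{p\B(\theta)} > 0$ and we integrate over $(0,t)$ with $t>0$); (ii) justifying that the arguments $a_i e^{\B(\theta)}$ stay in $[-M,M]$ where \eqref{flux-condition} is assumed — this follows from the $L^\infty$ bound on the entropy solution together with Proposition \ref{prop1}, which gives $u(x,t) = e^{\B(t)}\Psi(x - y(x,t),t)$ and hence control on $\Psi$; and (iii) the use of convexity to fix the sign of $f'(a_1 e^{\B(\theta)}) - f'(a_2 e^{\B(\theta)})$, so that \eqref{flux-condition} (stated with an absolute value) can be applied with the correct orientation. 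With these remarks in place the proof is complete.
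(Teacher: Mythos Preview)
Your proof is correct and follows essentially the same route as the paper's: difference the defining relation for $\Psi$, apply the power-law lower bound \eqref{flux-condition} pointwise inside the integral (using convexity to drop the absolute value), pull out the factor $|a_1-a_2|^p$, and invert. Your added remarks on $\gamma(t)>0$, the sign via convexity, and the range constraint for \eqref{flux-condition} are reasonable points of care that the paper leaves implicit.
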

\begin{proof}
	Fix two points $z_1,z_2\in\R$. Without loss of generality, we can assume $z_1>z_2$. Since $\Psi$ is increasing in its first variable, we have 
	\begin{equation}
	\Psi(z_1,t)e^{-\B(\theta)}\geq\Psi(z_2,t)e^{-\B(\theta)}.
	\end{equation}
	Since $f^{\p}$ satisfies $p$-degeneracy condition {\eqref{flux-condition} and $f'$ is continuous, that means that $f'$ is monotone.  Assume that $f'$ is increasing, so the absolute values are skipped in \eqref{flux-condition} and},
	\begin{equation}
	f^{\p}(a)-f^{\p}(b)\geq c_0(a-b)^{p}\mbox{ for }a\geq b.
	\end{equation}
	Therefore, by the definition \eqref{defn:Psi} of $\Psi$ we also have for $z_1 > z_2$,
	\begin{eqnarray}
	\abs{z_1-z_2}=z_1-z_2&=&\int\limits_{0}^{t}f^{\p}\left(\Psi(z_1,t)e^{\B(\theta)}\right)\,d\theta-\int\limits_{0}^{t}f^{\p}\left(\Psi(z_2,t)e^{\B(\theta)}\right)\,d\theta\\
	&\geq &\int\limits_{0}^{t}c_0\left(\Psi(z_1,t)-\Psi(z_2,t)\right)^{p}e^{p\B(\theta)}\,d\theta\\
	&=&c_0\abs{\Psi(z_1,t)-\Psi(z_2,t)}^{p}\int\limits_{0}^{t}e^{p\B(\theta)}\,d\theta.
	\end{eqnarray}
	This proves the inequality \eqref{ineq:lemma1}.
\end{proof}
Now we are ready to prove the regularity result for entropy solution to balance laws \eqref{balance_laws}.
\begin{proof}[Proof of Theorem \ref{theorem:positive}]
Fix a partition between $a=x_0<x_1<\cdots<x_m=b$. By Proposition \ref{prop1}, we have
\begin{equation}
\sum\limits_{k=1}^{m}\abs{u(x_k,t)-u(x_{k-1},t)}^p=e^{p\B(t)}\sum\limits_{k=1}^{m}\abs{\Psi(x_k-y(x_k,t),t)-\Psi(x_{k-1}-y(x_{k-1},t),t)}^p.
\end{equation}
By virtue of Lemma \ref{lemma:est1} we have
\begin{equation}
\sum\limits_{k=1}^{m}\abs{u(x_k,t)-u(x_{k-1},t)}^p\leq \dip e^{p\B(t)}(c_0\ga(t))^{-1}\sum\limits_{k=1}^{m}\abs{x_k-y(x_k,t)-x_{k-1}-y(x_{k-1},t)}.
\end{equation}
Since $x\mapsto y(x,t)$ is increasing for each fixed $t$, we have
\begin{eqnarray}
\sum\limits_{k=1}^{m}\abs{u(x_k,t)-u(x_{k-1},t)}^p
&\leq&{e^{p\B(t)}}(c_0\ga(t))^{-1}\left[b-a+y(b,t)-y(a,t)\right]\nonumber\\
&\leq&{e^{p\B(t)}}(c_0\ga(t))^{-1}\left[2(b-a)+2C(T)t\right]
\end{eqnarray}
for all $t\in(0,T)$. The last line follows from the inequality \eqref{finite_speed}.  This completes the proof of Theorem \ref{theorem:positive}.
\end{proof}

Our next aim is to establish the optimality of Theorem \ref{theorem:positive} for all time $t>0$ and for that we restrict our discussion for power-law type fluxes, more precisely, $f(u)=(p+1)^{-1}\abs{u}^{p+1}$ for $p>1$. 
\begin{proof}[Proof of Theorem \ref{theorem:balance_laws}] To set the path for constructing  an entropy solution to \eqref{balance_laws} which does not belong to $BV^s_{loc}(\re)$ for $s>1/p$, we first observe the structure of entropy solution to the following initial data. 

\begin{equation}\label{defn:initial_data_n}
u_0^n(x)=\left\{\begin{array}{lll}
0&\mbox{ for }&x<x_n-\De x_n,\\
\de_n&\mbox{ for }&x_n-\De x_n<x<x_n,\\
-\de_n&\mbox{ for }&x_n<x<x_n+\De x_n,\\
0&\mbox{ for }&x_n+\De x_n<x
\end{array}\right.
\end{equation}
for $\de_n,\,\De x_n>0$ and $x_n\in\re$. For $f(u)=(p+1)^{-1}\abs{u}^{p+1}$, $\Psi$ has the following  form
\begin{equation}\label{Psi:power-law}
\Psi(x,t)= {\textstyle {x}} \abs{{\textstyle {x}}}^{-\frac{p-1}{p}}\ga(t)^{-\frac{1}{p}}
\end{equation}
where $\gamma(t)$ is defined as in \eqref{defn:gamma}. With the help of results from \cite{AGG,ASG} we have the following observations
\begin{enumerate}
	\item Consider a Riemann problem $w_C^0$ defined as follows
	\begin{equation}
	w_C^0=\left\{\begin{array}{rr}
	w_-&\mbox{ for }x<x_0,\\
	w_+&\mbox{ for }x>x_0,
	\end{array}\right.\mbox{ where }w_->w_+.
	\end{equation}
	The entropy solution, $w_C$ to \eqref{balance_laws} corresponding to Riemann data $w_C^0$ has the following form
	\begin{equation}
	w_C=\left\{\begin{array}{rr}
	w_-e^{\B(t)}&\mbox{ for }x<x_0+\lambda (t),\\
	w_+e^{\B(t)}&\mbox{ for }x>x_0+\lambda (t),
	\end{array}\right.
	\end{equation}
	for $t>0$ where $\la(t)$ is defined as follows:
	\begin{equation}
		\la(t):=\frac{1}{w_+-w_-}\int\limits_{0}^{t}\left[f(w_+e^{\B(\theta)})-f(w_-e^{\B(\theta)})\right]e^{-\B(\theta)}\,d\theta.
	\end{equation}
	\item Next we consider a special data $w_L^0$ defined as follows:
	\begin{equation}
	w_L^0(x)=\left\{\begin{array}{lll}
	0&\mbox{ for }&x<x_L,\\
	\de_n&\mbox{ for }&x>x_L,
	\end{array}\right.
	\end{equation}
	where $\de_n>0$. Then entropy solution to \eqref{balance_laws} with initial data $w_L^0$ will look like
	\begin{equation}\label{structure:w_L}
	w_L(x,t)=\left\{\begin{array}{lll}
	0&\mbox{ for }&x<x_L,\\
	\Psi(x-x_L,t)e^{\B(t)}&\mbox{ for }&x_L\leq x\leq\zeta_L(t),\\
	\de_ne^{\B(t)}&\mbox{ for }&x>\zeta_L(t),
	\end{array}\right.
	\end{equation}
    for $t>0$ where $\zeta_L(t)$ are determined by 
    \begin{equation}\label{def:zeta_L}
    \Psi(\zeta_L(t)-x_L,t)=\de_n.
    \end{equation}
	
	\item
	Now consider the following data 
	\begin{equation}
	w_R^0(x)=\left\{\begin{array}{lll}
	-\de_n&\mbox{ for }&x<x_R,\\
	0&\mbox{ for }&x>x_R,
	\end{array}\right.
	\end{equation}
	where $\de_n>0$. Then entropy solution to \eqref{balance_laws} will look like
	\begin{equation}\label{structure:w_R}
	w_R(x,t)=\left\{\begin{array}{lll}
	-\de_ne^{\B(t)}&\mbox{ for }&x<\zeta_R(t),\\
	\Psi(x-x_R,t)e^{\B(t)}&\mbox{ for }&\zeta_R(t)\leq x\leq x_R,\\
	0&\mbox{ for }&x>x_R,
	\end{array}\right.\mbox{ for }t>0,
	\end{equation}
	where $\zeta_R$ is determined by
	\begin{equation}\label{def:zeta_R}
	\Psi(\zeta_R(t)-x_R,t)=-\de_n.
	\end{equation}
	
\end{enumerate}
Let us set $ \bar{x}_0:=x_n$, $x_L:=x_n-\De x_n$ and $x_R:=x_n+\De x_n$. Suppose the corresponding $\zeta_L(t)$ and $\zeta_R(t)$ intersect each other at $(\tilde{x}_n,t_n)$ for the first time. From \eqref{structure:w_L} and \eqref{structure:w_R} we observe that $x_n-\De x_n=x_L\leq \tilde{x}_n\leq x_R=x_n+\De x_n$. By using \eqref{Psi:power-law}, \eqref{def:zeta_L} and \eqref{def:zeta_R} we have
\begin{equation}
(\tilde{x}_n-x_n+\De x_n)^{\frac{1}{p}}\ga(t_n)^{-\frac{1}{p}}=\de_n= (x_n+\De x_n-\tilde{x}_n)^{\frac{1}{p}}\ga(t_n)^{-\frac{1}{p}}.
\end{equation}
Hence, we get $\tilde{x}_n=x_n$. Subsequently, we obtain $(\De x_n)^{\frac{1}{p}}\ga(t_n)^{-\frac{1}{p}}=\de_n$. Recall definition of $\g(t)$ as in \eqref{defn:gamma}. Therefore, $t_n$ is determined as follows,
\begin{equation}\label{eqn:intersection_point}
\int\limits_{0}^{t_n}e^{p\B(\theta)}\,d\theta=\frac{\De x_n}{\de_n^p}.
\end{equation}
Suppose $B^*$ is the integration of $e^{\B}$ over $\re_+$, that is,
\begin{equation}
B^*:=\int\limits_0^\f e^{p\B(\theta)}\,d\theta=\gamma(+\f).
\end{equation}
Note that $B^*\in(0,\f]$. For $\al\equiv0$ case, we have $B^*=\f$. Next consider the case when $B^*<\f$ and
\begin{equation}
\frac{\De x_n}{\de_n^p}\geq B^*.
\end{equation}
In this case, we have the following feature which does not arise for solutions of \eqref{eqn1}:
\begin{equation}
\zeta_L(t)<x_n<\zeta_R(t) \mbox{ for all }t\in(0,\f).
\end{equation}
Summarizing these observations we have
\begin{enumerate}[label=\textbf{O\arabic*}.]
	\item\label{O1} If $\frac{\De x_n}{\de_n^p}< B^*$ then \eqref{eqn:intersection_point} has a unique solution in $(0,\f)$. 
	
	\item\label{O2} If $\frac{\De x_n}{\de_n^p}\geq B^*$ then \eqref{eqn:intersection_point} has no solution in $(0,\f)$. In this case, we set $t_n=\f$, that is to say that $\zeta_L$ and $\zeta_R$ never meet with each other. 
\end{enumerate}

If $t_n<\f$ then note that for $t>t_n$ we have
\begin{equation}
\Psi(x_n-x_L,t)=-\Psi(x_n-x_R,t).
\end{equation}
Therefore we have the following structure of entropy solution $u$, to \eqref{balance_laws} with initial data $u_0$ as in \eqref{defn:initial_data_n}: 
\begin{enumerate}
	\item For $0<t<t_n$ we have
	\begin{equation}
	u^n(x,t)=\left\{\begin{array}{lll}
	0&\mbox{ for }&x<x_n-\De x_n,\\
	\Psi(x-(x_n-\De x_n),t)e^{\B(t)}&\mbox{ for }&x_n-\De x_n<x<\zeta_L(t),\\
	\de_ne^{\B(t)}&\mbox{ for }&\zeta_L(t)<x<x_n,\\
	-\de_ne^{\B(t)}&\mbox{ for }&x_n<x<\zeta_R(t),\\
	\Psi(x-(x_n+\De x_n),t)e^{\B(t)}&\mbox{ for }&\zeta_R(t)<x<x_n+\De x_n,\\
	0&\mbox{ for }&x_n+\De x_n<x.
	\end{array}\right.
	\end{equation}
	
	\item For $t>t_n$ we have
	\begin{equation}
	u^n(x,t)=\left\{\begin{array}{lll}
	0&\mbox{ for }&x<x_n-\De x_n,\\
	\Psi(x-(x_n-\De x_n),t)e^{\B(t)}&\mbox{ for }&x_n-\De x_n<x<x_n,\\
	\Psi(x-(x_n+\De x_n),t)e^{\B(t)}&\mbox{ for }&x_n<x<x_n+\De x_n,\\
	0&\mbox{ for }&x_n+\De x_n<x.
	\end{array}\right.
	\end{equation}
	
\end{enumerate}
Note that $TV^{s+\e}(u^n(\cdot,t))\geq (2\de_n)^{\frac{1}{s+\e}}e^{\frac{\B(t)}{s+\e}}$ for $t\in[0,t_n)$. From the above discussion we know that support of the entropy solution $u_n(\cdot,t)$ lies in $[x_n-\De x_n,x_n+\De x_n]$ for all time $t>0$. We choose $\De x_n=(n\log^2(n+1))^{-1}$ and $\de_n=(n\log^3(n+1))^{-\frac{1}{p}}$. Subsequently, we have
	\begin{equation}\label{convergence1}
	\frac{\De x_n}{\de_n^p}=\log(n+1)\rr\f\mbox{ as }n\rr\f.
	\end{equation}
	Since $\sum\limits_{n=1}^{\f}\De x_n<\f$ we can choose a sequence $x_n$ such that $x_n+\De x_n<x_{n+1}-\De x_{n+1}<x^*<\f$ for all $n\geq1$. Now we define an initial data $u_0$ as follows
	\begin{equation}
	u_0=\sum\limits_{n=1}^{\f}u^n_0.
	\end{equation}
	Note that by previous observation and choice of $x_n$, entropy solutions $u_n$ has mutually disjoint support for all $t>0$. Therefore, the entropy solution $u$ of \eqref{balance_laws} corresponding to initial data $u_0$ can be written as 
	\begin{equation}
	u(x,t)=\sum\limits_{n=1}^{\f}u_n(x,t)\mbox{ for all }x\in\re,\,t>0.
	\end{equation}
	Recall observations \eqref{O1} and \eqref{O2}. Hence, for each fixed $t\in(0,\f)$ there exists an $n_0$ such that $t<t_{n}$ for all $n\geq n_0$ due to \eqref{convergence1}. From definition \eqref{defn:beta} of $\B(t)$ we have $\B(t)\geq-t\|\al\|_{L^\f(\R_+)}$ for all $t\geq0$. Therefore, we have
	\begin{equation}
	TV^{s+\e}(u(\cdot,t))\geq   2^{\frac{1}{s+\e}}e^{\frac{\B(t)}{s+\e}}\sum\limits_{n=n_0}^{\f}\de_n^{\frac{p}{1+p\e}}=2^{\frac{1}{s+\e}}e^{\frac{-t}{s+\e}\|\al\|_{L^{\f}(\R_+)}}\sum\limits_{n=n_0}^{\f}\frac{1}{(n\log^3(n+1))^{\frac{1}{1+p\e}}}=\f.
	\end{equation}
	Note that the $TV^s(u(\cdot,t))<\f$ for $s=1/p$ and $t>0$ due to Theorem \ref{theorem:positive}.
\end{proof} 
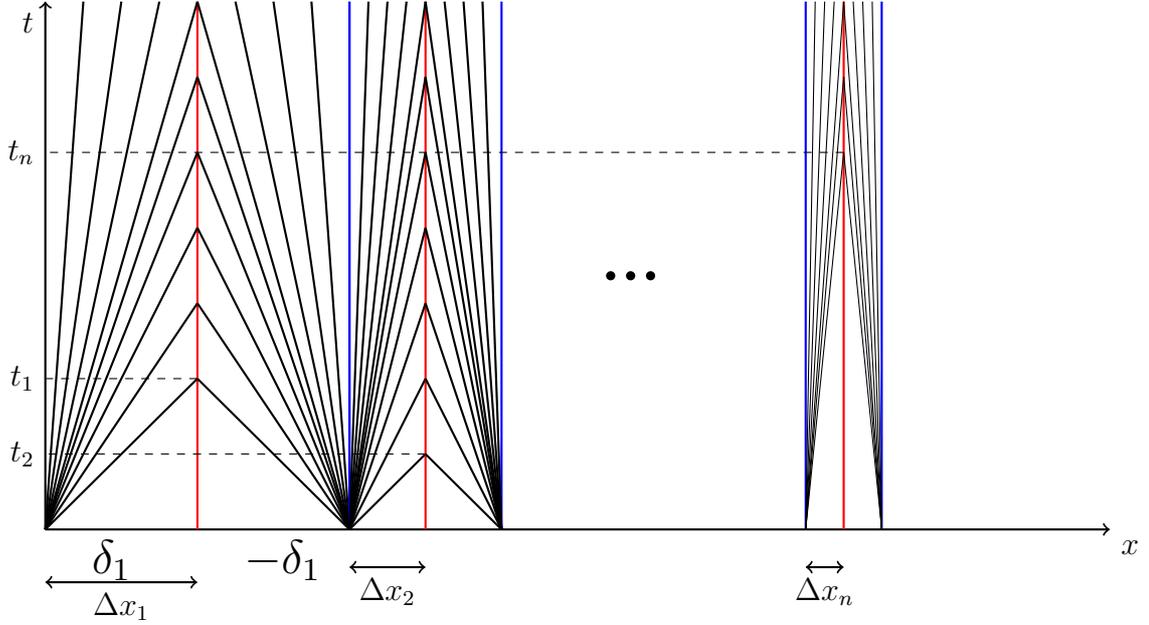
\begin{figure}
	\centering
	\begin{tikzpicture}
	
	\draw[thick,->] (-8 ,0) -- (6,0) node[anchor=north west] {$x$};
	\draw[thick,->] (-8 ,0) -- (-8,7) node[anchor=north east] {$t$};
	
	\draw[thick][color=blue]  (-4 ,0) -- (-4,7) ;
	\draw[thick][color=red]   (-6 ,0) -- (-6,7) ;
	\draw[thick][color=black] (-8 ,0) -- (-6,2) ;
	\draw[thick][color=black] (-8 ,0) -- (-6,3) ;
	\draw[thick][color=black] (-8 ,0) -- (-6,4) ;
	\draw[thick][color=black] (-8 ,0) -- (-6,5) ;
	\draw[thick][color=black] (-8 ,0) -- (-6,6) ;
	\draw[thick][color=black] (-8 ,0) -- (-6,7) ;
	\draw[thick][color=black] (-8 ,0) -- (-6.5,7);
	\draw[thick][color=black] (-8 ,0) -- (-7,7) ;
	\draw[thick][color=black] (-8 ,0) -- (-7.5,7);
	\draw[thick][color=black] (-4 ,0) -- (-6,2) ;
	\draw[thick][color=black] (-4 ,0) -- (-6,3) ;
	\draw[thick][color=black] (-4 ,0) -- (-6,4) ;
	\draw[thick][color=black] (-4 ,0) -- (-6,5) ;
	\draw[thick][color=black] (-4 ,0) -- (-6,6) ;
	\draw[thick][color=black] (-4 ,0) -- (-6,7) ;
	\draw[thick][color=black] (-4 ,0) -- (-5.5,7) ;
	\draw[thick][color=black] (-4 ,0) -- (-5,7) ;
	\draw[thick][color=black] (-4 ,0) -- (-4.5,7) ;

	\draw[thick][color=blue] (-4 ,0) -- (-4,7) ;
	\draw[thick][color=blue] (-2 ,0) -- (-2,7) ;
	\draw[thick][color=red] (-3 ,0) -- (-3,7) ;
	\draw[thick][color=black] (-4 ,0) -- (-3,1) ;
	\draw[thick][color=black] (-4 ,0) -- (-3,2) ;
	\draw[thick][color=black] (-4 ,0) -- (-3,3) ;
	\draw[thick][color=black] (-4 ,0) -- (-3,4) ;
	\draw[thick][color=black] (-4 ,0) -- (-3,5) ;
	\draw[thick][color=black] (-4 ,0) -- (-3,6) ;
	\draw[thick][color=black] (-4 ,0) -- (-3,7) ;
	\draw[thick][color=black] (-4 ,0) -- (-3.25,7) ;
	\draw[thick][color=black] (-4 ,0) -- (-3.5,7) ;
	\draw[thick][color=black] (-4 ,0) -- (-3.75,7) ;
	\draw[thick][color=black] (-2 ,0) -- (-3,1) ;
	\draw[thick][color=black] (-2 ,0) -- (-3,2) ;
	\draw[thick][color=black] (-2 ,0) -- (-3,3) ;
	\draw[thick][color=black] (-2 ,0) -- (-3,4) ;
	\draw[thick][color=black] (-2 ,0) -- (-3,5) ;
	\draw[thick][color=black] (-2 ,0) -- (-3,6) ;
	\draw[thick][color=black] (-2 ,0) -- (-3,7) ;
	\draw[thick][color=black] (-2 ,0) -- (-2.75,7) ;
	\draw[thick][color=black] (-2 ,0) -- (-2.5,7) ;
	\draw[thick][color=black] (-2 ,0) -- (-2.25,7) ;

	\draw[thick][color=blue] (2 ,0) -- (2,7) ;
	\draw[thick][color=blue] (3 ,0) -- (3,7) ;
	\draw[thick][color=red] (2.5 ,0) -- (2.5,7) ;
	\draw[color=black] (2 ,0) -- (2.5,5) ;
	\draw[color=black] (2 ,0) -- (2.5,6) ;
	\draw[color=black] (2 ,0) -- (2.5,7) ;
	\draw[color=black] (3 ,0) -- (2.625,7) ;
	\draw[color=black] (3 ,0) -- (2.75,7) ;
	\draw[color=black] (3 ,0) -- (2.875,7) ;
	\draw[color=black] (3 ,0) -- (2.5,5) ;
	\draw[color=black] (3 ,0) -- (2.5,6) ;
	\draw[color=black] (3 ,0) -- (2.5,7) ;
	\draw[color=black] (2 ,0) -- (2.375,7) ;
	\draw[color=black] (2 ,0) -- (2.25,7) ;
	\draw[color=black] (2 ,0) -- (2.125,7) ;
	
	\draw[dashed] (-6,2) -- (-8,2) node[anchor= east] {$t_1$};
	\draw[dashed] (-3,1) -- (-8,1)  node[anchor= east] {$t_2$}  ;
	\draw[dashed] (2.5,5) -- (-8,5) node[anchor= east] {$t_n$}  ;
	
	\draw[thick][] (-0.5,1.5) node[anchor=south west, transform canvas={scale=2}]{$\textbf{...} $};
	
	\draw[thick][] (-4.75,-0.6) node[anchor=south, transform canvas={scale=1.5}]{$\delta_1$};
	\draw[thick][] (-3.25,-0.6) node[anchor=south, transform canvas={scale=1.5}]{$-\delta_1$};

	\draw[thick,<->] (-8,-0.7) -- (-6,-0.7) ;
	\draw[thick][] (-7,-0.7) node[anchor=north] {$\Delta x_1$};
	\draw[thick,<->] (-4,-0.5) -- (-3,-0.5) ;
	\draw[thick][] (-3.5,-0.5) node[anchor=north] {$\Delta x_2$};
	\draw[thick,<->] (2,-0.5) -- (2.5,-0.5) ;
	\draw[thick][] (2.25,-0.5) node[anchor=north] {$\Delta x_n$};
	];
	\end{tikzpicture}
	\caption{This picture illustrates the entropy solution constructed in Theorem \ref{thm1} for $\al\equiv0$ case. This construction and structure of entropy solution have been previously studied in \cite{AGG,SC,AGV}.} \label{Example1}
\end{figure}

Our next result upgrades Theorem \ref{thm1} for more general class of functions satisfying the following hypothesis:
\begin{enumerate}[label=(H-\arabic{*}), ref=(H-\arabic{*})]
	\item\label{H-1} $f\in C^1(\re)$ is a strictly convex function such that $f(0)=f^{\p}(0)=0$.
	\item\label{H-2} There exist $q>1$, $r>0$  and $C>0$ such that 
	\begin{equation}\label{flux}
	0\leq f^{\p}(a)-f^{\p}(b)\leq C(a-b)^{q}\mbox{ for all }b\in(-r,0)\mbox{ and }a\in(0,r).
	\end{equation}
\end{enumerate}
\begin{remark}
We do not lose generality by assuming that $f(0)=f'(0)=0$, due to the change of variables $x \mapsto x-f'(0)t$ and $\tilde{f}(u) \eqdef f(u)-f(0)-f'(0)u$.
\end{remark}
The class of function satisfying \ref{H-1} and \ref{H-2} was previously considered in \cite{AGV} to show non-BV propagation for all time $t>0$. In this article, we will show the non-$BV^s$ propagation for same class of function in the context of balance laws \eqref{balance_laws}.
\begin{theorem}\label{thm2}
	Let $f\in C^1(\re)$ satisfying \ref{H-1} and \ref{H-2} along with super linear growth condition \eqref{eqn:super_linear}. Let $\al\in L^{\f}(0,\f)$. Then there exists a compact support initial data $u_0\in L^{\f}(\R)$ such that the corresponding entropy solution $u$ of \eqref{balance_laws} satisfies the following:
	\begin{equation}
	u(\cdot,t)\notin BV^s_{loc}(\re)\mbox{ for all }s>1/q,\, t>0.
	\end{equation}
\end{theorem}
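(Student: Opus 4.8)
The plan is to adapt the juxtaposition-of-bumps construction from the proof of Theorem~\ref{theorem:balance_laws}, replacing the explicit power-law formulas for $\Psi$ and $f'$ by the degeneracy estimate carried by \ref{H-2}. First I would record the elementary consequence of \ref{H-1}--\ref{H-2}: letting $b\to0^-$ and then $a\to0^+$ in \eqref{flux} and using $f'(0)=0$ gives $|f'(w)|\le C|w|^q$ for $|w|\le r$, and since $q\ge 1$ this in turn yields $f'(a)-f'(b)=|f'(a)|+|f'(b)|\le C(a-b)^q$ for $a\in(0,r)$ and $b\in(-r,0)$. This single estimate will play the role that the identity $f'(w)=w\,|w|^{p-1}$ plays in the power-law case.

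Next I would analyse the building block $u^n$, the entropy solution of \eqref{balance_laws} with the odd bump datum $u_0^n$ of \eqref{defn:initial_data_n} ($u_0^n=\de_n$ on $(x_n-\De x_n,x_n)$, $-\de_n$ on $(x_n,x_n+\De x_n)$, $0$ elsewhere). By Proposition~\ref{prop1} together with the three Riemann blocks $w_C,w_L,w_R$ described in the proof of Theorem~\ref{theorem:balance_laws}, for small times $u^n(\cdot,t)$ is, from left to right: the value $0$; a rarefaction issued from $x_n-\De x_n$ with right edge $\zeta_L(t)$; the plateau $\de_n e^{\B(t)}$; a shock at $x^n_s(t)$ (the one of the $w_C$-block with $w_-=\de_n$, $w_+=-\de_n$); the plateau $-\de_n e^{\B(t)}$; a rarefaction issued from $x_n+\De x_n$ with left edge $\zeta_R(t)$; the value $0$ again. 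From \eqref{defn:Psi} and \eqref{def:zeta_L}--\eqref{def:zeta_R}, $|\zeta_L(t)-(x_n-\De x_n)|=\int_0^t f'(\de_n e^{\B(\theta)})\,d\theta$ and $|\zeta_R(t)-(x_n+\De x_n)|=\int_0^t|f'(-\de_n e^{\B(\theta)})|\,d\theta$, while the Rankine--Hugoniot expression for the $w_C$-block bounds $|x^n_s(t)-x_n|$ by $\tfrac{1}{2\de_n}\int_0^t|f(\de_n e^{\B(\theta)})-f(-\de_n e^{\B(\theta)})|\,e^{-\B(\theta)}\,d\theta$. Hence, as long as $\de_n e^{\|\al\|_\infty t}<r$, all three of these quantities are bounded by $C\,\de_n^q\int_0^t e^{q\B(\theta)}\,d\theta\le C\,\de_n^q\,t\,e^{q\|\al\|_\infty t}$, using $|f'(w)|\le C|w|^q$ near $0$ and $|\B(\theta)|\le\|\al\|_\infty\theta$. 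Consequently, whenever $2C\,\de_n^q\,t\,e^{q\|\al\|_\infty t}<\De x_n$ (and $\de_n e^{\|\al\|_\infty t}<r$) one has $\zeta_L(t)<x^n_s(t)<\zeta_R(t)$, so the two plateaus are nonempty and $u^n(\cdot,t)$ takes both values $\pm\de_n e^{\B(t)}$ inside $(x_n-\De x_n,x_n+\De x_n)$. Moreover $u^n(\cdot,t)$ vanishes outside $[x_n-\De x_n,x_n+\De x_n]$ for all $t$, since $u\equiv0$ is transported with characteristic speed $f'(0)=0$.

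I would then set $\de_n=(n\log^3(n+1))^{-1/q}$ and $\De x_n=(n\log^2(n+1))^{-1}$, so that $\De x_n/\de_n^q=\log(n+1)\to\f$ while $\sum_n\De x_n<\f$, choose points $x_n$ with $x_n+\De x_n<x_{n+1}-\De x_{n+1}$ accumulating at some finite $x^*$, and take $u_0=\sum_n u_0^n\in L^\f(\R)$, which is compactly supported. Since the $u^n(\cdot,t)$ have pairwise disjoint supports for all $t>0$, a standard localization/gluing argument identifies the entropy solution of \eqref{balance_laws} as $u=\sum_n u^n$. Now fix $t>0$. As $\De x_n/\de_n^q\to\f$ and $\de_n\to0$, there is $n_0(t)$ such that both smallness conditions of the previous step hold for every $n\ge n_0(t)$, so $u(\cdot,t)$ attains both values $\pm\de_n e^{\B(t)}$ in $(x_n-\De x_n,x_n+\De x_n)$. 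Selecting for each $n$ with $n_0(t)\le n\le N$ one abscissa where $u(\cdot,t)=\de_n e^{\B(t)}$ and one where $u(\cdot,t)=-\de_n e^{\B(t)}$ and taking all these (ordered) abscissas as a subdivision, we obtain, using $\B(t)\ge-\|\al\|_\infty t$ and $s>1/q$ (so $1/(qs)<1$),
\[
TV^s\big(u(\cdot,t)\big)\ \ge\ \sum_{n=n_0(t)}^{N}\big(2\de_n e^{\B(t)}\big)^{1/s}\ \ge\ \big(2e^{-\|\al\|_\infty t}\big)^{1/s}\sum_{n=n_0(t)}^{N}\frac{1}{(n\log^3(n+1))^{1/(qs)}}\ \xrightarrow[N\to\f]{}\ +\f .
\]
Since the abscissas used lie in $(x_1-\De x_1,x^*)$ and accumulate at $x^*$, the same divergence holds for $TV^s$ over any compact interval containing $x^*$, so $u(\cdot,t)\notin BV^s_{loc}(\R)$ for all $s>1/q$; as $t>0$ was arbitrary, this is the theorem.

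The main obstacle is the building-block analysis for a flux that is \emph{not} assumed symmetric: in the power-law case of Theorem~\ref{theorem:balance_laws} symmetry pins the inner shock at $x_n$ and makes $\zeta_L$ and $\zeta_R$ mirror images, yielding the explicit interaction time of \eqref{eqn:intersection_point}, whereas here the shock curve $x^n_s$ must be controlled through the entropy (Lax) inequality and Rankine--Hugoniot, and the three curves $\zeta_L,\zeta_R,x^n_s$ must be bounded simultaneously by the single degeneracy estimate $|f'(w)|\lesssim|w|^q$ distilled from \eqref{flux}. The remaining ingredients --- the choice of $(\de_n,\De x_n)$, the verification that $\sum_n\De x_n<\f$ while $\sum_n\de_n^{1/s}=\f$ for every $s>1/q$, and the finite-speed gluing of the disjointly supported blocks --- are routine adaptations of the argument already given for Theorem~\ref{theorem:balance_laws}.
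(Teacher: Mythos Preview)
Your route is genuinely different from the paper's and, once patched, is more elementary.  The paper does \emph{not} re-use the odd bump $\pm\delta_n$: for each interval $[A_n,B_n]$ and a single prescribed time $t_0$ it manufactures heights $a_n>0>b_n$ and an interior point $\tau_n$ satisfying the compatibility relations $G(a_n)=G(b_n)$ and $B_n-A_n=F_+(a_n)+F_-(b_n)$ (for functions $G,F_\pm$ built from $f$ and $\beta$), so that the two rarefactions reach the central shock \emph{simultaneously} at $t_0$, after which the block is an explicit N-wave for all later times.  The paper then lower-bounds the jump across the single shock \emph{for every $t$} by $(B_n-A_n)^{1/q}$, using \eqref{flux} applied to $\Psi$.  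Your argument bypasses this machinery: since $\Delta x_n/\delta_n^q\to\infty$, the pre-interaction phase of the $n$-th block outlasts any fixed $t$, and you only use the plateau jump $2\delta_n e^{\beta(t)}$.  What the paper's approach buys is a uniform-in-time lower bound on every block, so no tail index $n_0(t)$ is needed; what yours buys is that the delicate construction of $(a_n,b_n,\tau_n)$ is avoided entirely.

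There is, however, a real gap in your argument.  The decomposition $u=\sum_n u^n$ requires that each $u^n(\cdot,t)$ vanish outside $[x_n-\Delta x_n,x_n+\Delta x_n]$ for \emph{all} $t>0$, and the sentence ``$u\equiv0$ is transported with characteristic speed $f'(0)=0$'' does not establish this.  That observation only says the interval endpoints are characteristics as long as the solution is continuous there; it does not exclude a shock from eventually reaching and crossing them.  For a non-symmetric flux this is a genuine danger: after the rarefaction--shock interactions the single remaining shock drifts, and if it approaches, say, $x_R=x_n+\Delta x_n$, the limiting Rankine--Hugoniot speed is $f(u_-)/u_->0$, which would carry it into the neighbouring block.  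The paper confronts exactly this issue (for its own building blocks) via the Lax--Ole\u{\i}nik value function, and the crux of that argument is that $\int u_0^n=0$: if the shock hit $x_R$, the two extremal backward characteristics from $(x_R,t_1)$ would have equal action, forcing the non-vertical one to be vertical, a contradiction.  Your odd data also have zero mean, so the same argument applies verbatim, but you must supply it.  A shorter variant is a direct mass argument: the source being linear, $\int_\R u^n(\cdot,t)\,dx=e^{\beta(t)}\int u_0^n=0$; yet if the shock reached $x_R$ (resp.\ $x_L$), the remaining profile would be positive (resp.\ negative) on $(x_L,x_R)$, contradicting zero mass.  With this ingredient added, your proof goes through.
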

\begin{remark}
	Note that Theorem \ref{thm2} is optimal for the class of fluxes satisfying \eqref{flux}. It is easy to verify that $f(u)=(q+1)^{-1}|u|^{q+1}$ satisfies \eqref{flux} and as we have seen in Theorem \ref{theorem:positive}, $u(\cdot,t)\in BV^{1/q}$ for $t>0$.
\end{remark}

 \noi\textbf{Observation:} We want to make a remark that for convex $f$ satisfying \ref{H-1}, we have $\Psi(0,t)=0$. From the definition of $\Psi$ we have
 \begin{equation}\label{est:int0}
 0=\int\limits_{0}^{t}f^{\p}\left(\Psi(0,t)e^{\B(\theta)}\right)\,d\theta.
 \end{equation}
 Since $f$ is a $C^1$ strictly convex function, $f^{\p}$ is increasing. Hence $af^{\p}(a)>0$ for any $a\neq0$ because $f^{\p}(0)=0$. 
 Suppose $\Psi(0,t)>0$ then
 \begin{equation}\label{est:int1}
  \int\limits_{0}^{t}f^{\p}\left(\Psi(0,t)e^{\B(\theta)}\right)\,d\theta>0.
 \end{equation}
 Similarly, if $\Psi(0,t)<0$, then we have
 \begin{equation}\label{est:int2}
 \int\limits_{0}^{t}f^{\p}\left(\Psi(0,t)e^{\B(\theta)}\right)\,d\theta<0.
 \end{equation}
 Note that both \eqref{est:int1} and \eqref{est:int2} contradict with \eqref{est:int0}. Therefore we have $\Psi(0,t)=0$. Note that $\Psi$ is increasing in its first variable due to strict convexity assumption on $f$. Subsequently, we get $x\Psi(x,t)>0$ for any $x\neq0$.
 
 Before we give the main construction to prove Theorem \ref{thm2} we first recall some results from \cite{AGV} and find structure of the entropy solution to the following data
\begin{equation}\label{data1}
u_{A,B}^0(x)
=\left\{\begin{array}{lll}
0&\mbox{ if }&x\notin[A,B],\\
a_{A,B}&\mbox{ if }&x\in[A,\tau],\\
b_{A,B}&\mbox{ if }&x\in(\tau,B].\\
\end{array}\right.
\end{equation}
Next we make a choice for the pair $(a_{A,B},b_{A,B})$ depending on $A,B$. For that purpose we define $G:\R\rr\R$ as  
\begin{equation}\label{def:G}
G(a)=\int\limits_{0}^{t_0}af^{\p}(ae^{\B(\theta)})-f(ae^{\B(\theta)})e^{-\B(\theta)}\,d\theta,
\end{equation}
where $t_0>0$ is fixed. Now we claim that $z\mapsto G(z)$ is increasing for $z>0$ and decreasing for $z<0$. To see this consider $a>a_1>0$, then by Mean Value Theorem we have
\begin{align*}
&ae^{\B(\theta)}f^{\p}(ae^{\B(\theta)})-a_1e^{\B(\theta)}f^{\p}(a_1e^{\B(\theta)})-f(ae^{\B(\theta)})+f(a_1e^{\B(\theta)})\\
=&ae^{\B(\theta)}f^{\p}(ae^{\B(\theta)})-a_1e^{\B(\theta)}f^{\p}(a_1e^{\B(\theta)})-f^{\p}(c^{*})(a-a_1)e^{\B(\theta)},
\end{align*}
for some $c^{*}\in(a_1e^{\B(\theta)},ae^{\B(\theta)})$. Since $f^{\p}$ is increasing, we get
\begin{align*}
&ae^{\B(\theta)}f^{\p}(ae^{\B(\theta)})-a_1e^{\B(\theta)}f^{\p}(a_1e^{\B(\theta)})-f(ae^{\B(\theta)})+f(a_1e^{\B(\theta)})\\
&\geq ae^{\B(\theta)}f^{\p}(ae^{\B(\theta)})-a_1e^{\B(\theta)}f^{\p}(a_1e^{\B(\theta)})-f^{\p}(ae^{\B(\theta)})(a-a_1)e^{\B(\theta)}\\
&=a_1e^{\B(\theta)}\left[f^{\p}(ae^{\B(\theta)})-f^{\p}(a_1e^{\B(\theta)})\right]\\
&\geq0.
\end{align*}
Hence, from \eqref{def:G} we obtain $a\mapsto G(a)$ is increasing for $a>0$. By a similar argument, we get $b\mapsto G(b)$ is decreasing for $b<0$. Therefore, we have that there exists $r_0>0$ such that for a given $a\in (0,r_0)$ there is a $b\in(-r,0)$ satisfying $G(a)=G(b)$. Let us fix $a_0\in(0,r_0)$ and $b_0\in(-r,0)$ such that $G(a_0)=G(b_0)$. Define $F_-:[b_0,0]\rr\R$ and $F_+:[0,a_0]\rr\R$ as follows
\begin{equation}
F_+(a)=\int\limits_{0}^{t_0}f^{\p}(ae^{\B(\theta)})\,d\theta\mbox{ and }F_-(b)=-\int\limits_{0}^{t_0}f^{\p}(be^{\B(\theta)})\,d\theta.
\end{equation}
Since $f^{\p}$ is increasing and $f^{\p}(0)=0$, $F_+$ is increasing and $F_-$ is decreasing. We also have $F_+(0)=F_-(0)=0$. Now we fix $A,B$ such that 
\begin{equation}\label{AB:bound}
B-A\leq \min\{F_+(a_0),F_-(b_0)\}.
\end{equation}  We wish to find $a_{A,B}\in[0,a_0],b_{A,B}\in[b_0,0]$ such that 
\begin{equation}\label{AB}
G(a_{A,B})=G(b_{A,B})\mbox{ and }B-A=F_+(a_{A,B})+F_-(b_{A,B}).
\end{equation}
Since $B-A\leq \min\{F_+(a_0),F_-(b_0)\}$, by Intermediate Value Theorem there exist $\bar{a}\in[0,a_0],\bar{b}\in[b_0,0]$ such that $B-A=F_+(\bar{a})=F_-(\bar{b})$. Define $\la:=\min\{G(\bar{a}),G(\bar{b})\}$. Without loss of generality, suppose $\la=G(\bar{a})$. Then we set $a_1=\bar{a}$ and $b_1=0$. Hence $B-A=F_+(a_1)+F_-(b_1)$. Now we choose $b_2\in[b_0,0]$ such that $G(b_2)=G(a_1)\in[0,\la]$. Note that $b_2\geq \bar{b}$ since $G(x)$ is decreasing for $x<0$ and $G(b_2)\leq\la\leq G(\bar{b})$. Since $F_-$ is decreasing, we get $0\leq F_-(b_2)\leq F_-(\bar{b})=B-A$. Now by Intermediate Value Theorem, we choose $a_2\in[0,\bar{a}]$ such that $B-A=F_-(b_2)+F_+(a_2)$. Having defined $\{a_k\}_{1\leq k\leq n}\subset[0,\bar{a}]$ and $\{b_k\}_{1\leq k\leq n}\subset[\bar{b},0]$ such that $B-A=F_-(b_n)+F_+(a_n)$ we choose $b_{n+1}\in[\bar{b},0]$ such that $G(b_{n+1})=G(a_{n})$. Note that the choice of $b_{n+1}$ is guaranteed as $0\leq G(a_{n})\leq\la\leq G(\bar{b})$. 
 Subsequently, we get $0\leq F_-(b_{n+1})\leq B-A=F_+(\bar{a})$. Now we choose $a_{n+1}\in[0,\bar{a}]$ such that $B-A=F_-(b_{n+1})+F_+(a_{n+1})$. Hence, by this inductive process we get $\{a_{n}\}_{n\in\N}\subset[0,\bar{a}]$ and $\{b_{n}\}_{n\in\N}\subset[\bar{b},0]$. Since both sequences are bounded, there is a subsequence $n_k$ such that $b_{n_k}\rr b_{A,B}$ and $a_{n_k}\rr a_{A,B}$ as $k\rr\f$ for some $a_{A,B}\in[0,\bar{a}]$ and $b_{A,B}\in[\bar{b},0]$. Since $F_\pm,G$ are continuous functions, by passing to the limit we show that $a_{A,B}\in[0,\bar{a}]$ and $b_{A,B}\in[\bar{b},0]$ satisfy \eqref{AB}.

Suppose $a_{A,B},b_{A,B}$ are satisfying \eqref{AB}. Now we can choose $\tau$ as follows:
\begin{equation}\label{t0}
 \tau+\int\limits_0^{t_0}\frac{f(a_{A,B}e^{\B(\theta)})-f(b_{A,B}e^{\B(\theta)})}{a_{A,B}-b_{A,B}}e^{-\B(\theta)}\,d\theta=A+\int\limits_{0}^{t_0}f^{\p}(a_{A,B}e^{\B(\theta)})\,d\theta.
\end{equation}
Since $G(a_{A,B})=G(b_{A,B})$ and $B-A=F_-(b_{A,B})+F_+(a_{A,B})$ we get
\begin{equation}\label{int0}
a_{A,B}(\tau-A)+b_{A,B}(B-\tau)=0.
\end{equation}

Suppose $u_{A,B}(x,t)$ is the entropy solution to \eqref{balance_laws} for initial data \eqref{data1}. Then as it has been discussed in \cite[section 3]{AGV}, $u_{A,B}$ enjoys the following structure up to time $t_0$:
	\begin{eqnarray}
	u_{A,B}(x,t)&=&\left\{\begin{array}{lll}\label{structure1}
	0&\mbox{ if }&x\notin[\xi_-(t),\xi_+(t)],\\
	\Psi(x-A,t)e^{\B(t)}&\mbox{ if }&\xi_-(t)\leq x\leq \zeta_-(t),\\
	a_{A,B}e^{\B(t)}&\mbox{ if }&\zeta_-(t)<x<\zeta_0(t),\\
	b_{A,B}e^{\B(t)}&\mbox{ if }&\zeta_0(t)<x<\zeta_+(t),\\
	\Psi(x-B,t)e^{\B(t)}&\mbox{ if }& \zeta_+(t)\leq x\leq \xi_+(t),
	\end{array}\right.
	\end{eqnarray}
	where the curves $\xi_\pm,\zeta_\pm,\zeta_0$ are determined as follows
	\begin{align}
	&\Psi(\xi_-(t)-A,t)=0=\Psi(\xi_+(t)-B,t),\\
	&\Psi(\zeta_-(t)-A,t)=a\mbox{ and }\Psi(\zeta_+(t)-B,t)=b,\\
	&\zeta_0(t)=\frac{1}{a_{A,B}-b_{A,B}}\int\limits_{0}^{t}\left(f(a_{A,B}e^{\B(\theta)})-f(b_{A,B}e^{\B(\theta)})\right)e^{-\B(\theta)}\,d\theta.
	\end{align}
	Note that by hypothesis \ref{H-1} $\xi_-(t)=A$ and $\xi_+(t)=B$. By \eqref{t0} we have that two curves $\zeta_\pm$ meet with each other at point $t_0$. For $t\in(t_0,t_0+\De t)$ for small $\De t>0$, the entropy solution $u_{A,B}$ satisfies the following structure
	\begin{eqnarray}\label{structure2}
	u_{A,B}(x,t)&=&\left\{\begin{array}{lll}
	0&\mbox{ if }&x\notin[A,B],\\
	\Psi(x-A)e^{\B(t)}&\mbox{ if }&A\leq x< \zeta_M(t),\\
	\Psi(x-B)e^{\B(t)}&\mbox{ if }& \zeta_M(t)<x\leq B,
	\end{array}\right.
	\end{eqnarray}
	where $\zeta_M(t)$ is the characteristic curve starting at the point $(\tau,0)$. Next we claim that $\zeta_M(t)\in(A,B)$ and  the structure \eqref{structure2} continues to hold for all $t>t_0$. We can prove this in the same way as it was done for \cite[Lemma 3.12]{AGV}. Suppose the curve $t\mapsto\zeta_M(t)$ intersects either $x=A$ line or $x=B$ line. Without loss of generality we assume that $\zeta_M(t)$ first meets $x=B$ line. Therefore there exists a time $t_1>0$ such that at $t=t_1$ we have $\zeta_M(t_1)=B$ and $A<\zeta_M(t)<B$ for $0\leq t<t_1$. Therefore, \eqref{structure2} is valid up to time $t_1$. 
	Consider $\g_\pm$ defined as follows
	\begin{equation}
	\Psi(\g_-(t)-A,t)=\Psi(\zeta_M(t_1)-A,t_1)\mbox{ and }\g_+=\xi_+.
	\end{equation} Since \eqref{structure2} is valid up to time $t_1$, $\g_\pm(t)$ are minimizing curve of the following value function
	\begin{equation}
	U(x,t)=\min\left\{\int_{0}^{\ga(0)}u^0_{A,B}(y)\,dy+\int\limits_0^te^{-\B(\theta)}f^{*}\left(\dot{\g}(\theta)\right)\,d\theta;\,\g:[0,t]\rr\re,\,\g(t)=x\right\}
	\end{equation}
	where $f^{*}$ is the Legendre transform of $f$. By \eqref{t0} we have
	\begin{equation}
	\int\limits_{A}^{B}u^0_{A,B}=0.
	\end{equation}
	Therefore we have
	\begin{equation}
	\int\limits_0^{t_1}e^{-\B(\theta)}f^{*}\left(\dot{\g}_+(\theta)\right)\,d\theta=\int\limits_0^{t_1}e^{-\B(\theta)}f^{*}\left(\dot{\g}_-(\theta)\right)\,d\theta.
	\end{equation}
	By using the definition of $\Psi$ we have
	\begin{equation}\label{eqn:gamma_-}
	\gamma_-(t)-A=\int\limits_{0}^{t}f^{\p}\left(\Psi(\gamma_-(t)-A,t)e^{\B(\theta)}\right)\,d\theta=\int\limits_{0}^{t}f^{\p}\left(\Psi(\zeta_M(t_1)-A,t_1)e^{\B(\theta)}\right)\,d\theta.
	\end{equation}
	Differentiating \eqref{eqn:gamma_-} with respect to $t$, we obtain
	\begin{equation}
	\dot{\ga}_-(\theta)=f^{\p}\left(\Psi(\zeta_M(t_1)-A,t_1)e^{\B(\theta)}\right)\mbox{ for }\theta\in(0,t_1).
	\end{equation}
	Similarly, we have
	\begin{equation}
	\dot{\g}_+(\theta)=0\mbox{ for }\theta\in(0,t_1).
	\end{equation}
	Since $f^{*}\geq0$ we have $f^*(\dot{\g}_-(\theta))=0$ for a.e. $\theta\in[0,t_1]$. Since $0$ is unique minima of $f^*$, we have $\dot{\ga}_-(\theta)=0$ a.e. $\theta\in[0,t_1]$. This gives a contradiction. Hence our claim is proved i.e. $\zeta_M(t)\in(A,B)$ for all $t\geq0$. Now we are ready to prove Theorem \ref{thm2}.

\begin{proof}[Proof of Theorem \ref{thm2}:] 
	Define $A_n,B_n$ as follows:
	\begin{align}
	&A_n=x_n-\frac{1}{n(\log(n+1))^2}\mbox{ and }B_n=x_n+\frac{1}{n(\log(n+1))^2},\nonumber\\
	\mbox{ where }&x_n=4\sum\limits_{k=1}^{n-1}\frac{1}{k(\log(k+1))^2}+\frac{2}{n(\log(n+1))^2}.
	\end{align}
	From the choice of $A_n,B_n$ and $x_n$ it is clear that
	\begin{equation}\label{x0limit}
	B_{n-1}<A_{n}<B_n<A_{n+1}\mbox{ and }\lim\limits_{n\rr\f}x_n= x_0<\f.
	\end{equation}
	Note that $B_n-A_n=2n^{-1}(\log(n+1))^{-2}\rr0$ as $n\rr\f$. Therefore, there exists $n_0\in\N$ such that $A_n,B_n$ satisfy \eqref{AB:bound} for all $n\geq n_0$. By the previous observation, we find $a_{A_n,B_n},b_{A_n,B_n}$ satisfying \eqref{AB} for $B_n-A_n$. Next we define initial data $u_0$ as follows:
	\begin{equation}
	u_0(x)=\left\{\begin{array}{lll}
	u_{A_n,B_n}^0&\mbox{ if }x\in[A_n,B_n]\mbox{ for }n\geq n_0,&\\
	0&\mbox{ otherwise}&
	\end{array}\right.
	\end{equation}
	where $u^0_{A_n,B_n}$ is defined in \eqref{data1}. To simplify the notation we denote $a_n=a_{A_n,B_n}$ and $b_n=b_{A_n,B_n}$. By using \ref{H-2} in \eqref{AB} we get 
	\begin{equation}
	B_n-A_n\leq C(a_n-b_n)^q\int\limits_{0}^{t_0}e^{q\B(\theta)}\,d\theta.
	\end{equation}
	Therefore, we get
	\begin{equation}\label{abAB}
	a_n-b_n\geq c_0^{-\frac{1}{q}}(B_n-A_n)^{\frac{1}{q}}\mbox{ where }c_0=C\int\limits_{0}^{t_0}e^{q\B(\theta)}\,d\theta.
	\end{equation} From \eqref{x0limit} it is clear that $u_0$ has compact support in $\re$. By structure \eqref{structure1} and \eqref{structure2} we know that if $u_{A_n,B_n}$ is the entropy solution to \eqref{balance_laws} for initial data $u^0_{A_n,B_n}$ then the support of $u_{A_n,B_n}$ lies in the strip $[A_n,B_n]\times[0,\f)$. Therefore, if $u(x,t)$ is the solution to \eqref{balance_laws} then $u$ has the following structure
	\begin{equation}\label{structure3}
	u(x,t)=\left\{\begin{array}{lll}
	u_{A_n,B_n}(x,t)&\mbox{ if }x\in[A_n,B_n]\mbox{ for }n\geq n_0,&\\
	0&\mbox{ otherwise.}&
	\end{array}\right.
	\end{equation}
	By $\zeta_n$ we denote the curve $\zeta_M$ appeared in the structure \eqref{structure2} of $u_{A,B}$ corresponding to $A=A_n,B=B_n$.
	From \eqref{structure1} and \eqref{structure2} we obtain the following estimate for any $t>0$,
	\begin{equation}\label{TVs:calc1}
	TV^{s}(u_{A_n,B_n}(\cdot,t))[A_n,B_n]\geq\min\left\{(b_n-a_n)^{\frac{1}{s}}e^{\frac{\B(t)}{s}},\abs{\Psi(\zeta_n(t)-A_n,t)-\Psi(\zeta_n(t)-B_n,t)}^{\frac{1}{s}}e^{\frac{\B(t)}{s}}\right\}.
	\end{equation}
	From the definition of $\Psi$ we have
	\begin{equation}
	\zeta_n(t)-A_n-\zeta_n(t)+B_n=\int\limits_{0}^{t}\left(f^{\p}\left(\Psi(\zeta_n(t)-A_n,t)e^{\B(\theta)}\right)-f^{\p}\left(\Psi(\zeta_n(t)-B_n,t)e^{\B(\theta)}\right)\right)\,d\theta.
	\end{equation}
	 Note that $\zeta_n(t)-A_n>0>\zeta_n(t)-B_n$. Since $\Psi$ is increasing in its first variable and $\Psi(0,t)=0$, we have
	\begin{equation}
	\Psi(\zeta_n(t)-A_n,t)>0>\Psi(\zeta_n(t)-B_n,t).
	\end{equation}
	From the decay condition \eqref{flux} we have
	\begin{equation}
	B_n-A_n\leq C\int\limits_{0}^{t}\left(\Psi(\zeta_n(t)-A_n,t)-\Psi(\zeta_n(t)-B_n,t)\right)^{q}e^{q\B(\theta)}\,d\theta.
	\end{equation}
	Therefore we have
	\begin{equation}\label{estimate:Psi}
	\abs{\Psi(\zeta_n(t)-A_n,t)-\Psi(\zeta_n(t)-B_n,t)}\geq (B_n-A_n)^{\frac{1}{q}}\varrho(t)^{-\frac{1}{q}}\mbox{ where }	\varrho(t):=C\int\limits_{0}^{t}e^{q\B(\theta)}\,d\theta.
	\end{equation}
	Combining \eqref{abAB}, \eqref{TVs:calc1} and \eqref{estimate:Psi} we have
	\begin{equation}\label{TVs:calc2}
	TV^{s}(u_{A_n,B_n}(\cdot,t))[A_n,B_n]\geq\min\left\{c_0^{-\frac{1}{qs}}e^{\frac{1}{s}\B(t)}(B_n-A_n)^{\frac{1}{qs}},(B_n-A_n)^{\frac{1}{qs}}\varrho(t)^{-\frac{1}{qs}}e^{\frac{1}{s}\B(t)}\right\}.
	\end{equation}
	Fix an $s>q^{-1}$. Then there exists $\de>0$ such that $s=q^{-1}+\de$. By our choice of $A_n$ and $B_n$ we have
	\begin{equation}
	B_n-A_n=\frac{2}{n(\log(n+1))^2}.
	\end{equation}
	Since $s=(1/q)+\de$ we have $sq=1+q\de$. We observe that $\B(t)\geq -t\|\al\|_{L^{\f}(\R_+)}$ and $\varrho(t)\leq tCe^{q\|\al\|_{L^{\f}(\R_+)}}$. From definition of $c_0$ we have $c_0\leq t_0Ce^{q\|\al\|_{L^{\f}(\R_+)}}$. Hence, we obtain
	\begin{align}
	TV^{s}(u_{A_n,B_n}(\cdot,t))[A_n,B_n]&\geq 2^{\frac{1}{1+q\de}}e^{\frac{1}{s}\B(t)}\min\left\{c_0^{-\frac{1}{qs}},\varrho(t)^{-\frac{1}{qs}}\right\}n^{-\frac{1}{1+q\de}}(\log(n+1))^{-\frac{2}{1+q\de}}\nonumber\\
	&\geq 2^{\frac{1}{1+q\de}}e^{\frac{-(t+1)}{s}\|\al\|_{L^{\f}(\R_+)}}\min\left\{(Ct_0)^{-\frac{1}{qs}},(Ct)^{-\frac{1}{qs}}\right\}\frac{n^{-\frac{1}{1+q\de}}}{(\log(n+1))^{\frac{2}{1+q\de}}}.
	\label{TVs:calc3}
	\end{align}
	Since $[A_n,B_n],n\geq n_0$ are disjoint intervals we have
	\begin{align*}
	TV^s(u(\cdot,t))&\geq\sum\limits_{n\geq n_0}TV^s(u_{A_n,B_n}(\cdot,t))[A_n,B_n]\\
	&\geq
	2^{\frac{1}{1+q\de}}e^{\frac{-(t+1)}{s}\|\al\|_{L^{\f}(\R_+)}}\min\left\{(Ct_0)^{-\frac{1}{qs}},(Ct)^{-\frac{1}{qs}}\right\}
	\sum\limits_{n\geq n_0}\frac{n^{-\frac{1}{1+q\de}}}{(\log(n+1))^{\frac{2}{1+q\de}}}=\f. 
	\end{align*}
	\end{proof}
\section{The scalar  multi-D case}\label{sec:multi-D}
\quad In this section we deal with $C^\f$--flux function for multi-D scalar conservation laws which reads as follows
\begin{equation}\label{eq:multi-D}
\partial_t  U + \mbox{div}_X F(U)= 0, \qquad U(X,0)=U_0(X).
\end{equation} 
 Non-linearity of a multi-D smooth flux is defined as 
\begin{definition}[Nonlinear flux, \cite{J4}]
Let $F$ belong to  $ C^{\f}([a,b],\R^m)$ $[a,b]$ and  for each $U\in[a,b]$, 
\begin{equation} \label{def:dF}
d_F:=\sup_{ U \in [a,b]}\inf\{k\in\N;\,k\geq1,\mbox{span}(F^{\p\p}(U),\cdots,F^{k+1}(U))=\R^m\} \in \N \cup \{+ \infty\},
\end{equation} 
If $d_F < + \infty$ then $F$ is called a nonlinear flux. \\
 If $d_F=m$ it is called a genuinely nonlinear flux.
\end{definition}
It can be checked \cite{J4} that since $[a,b]$ is compact $d_F[\cdot]$ attains its maximum at some point $\bar{U}\in[a,b]$, so, $d_F$ is well defined in  $ \N \cup \{+ \infty\}$. 
By definition of $d_F$, at least $m$ derivatives of $F'$ are needed to span the $m$-dimensional space $\R^m$ so $d_F \geq m$.
 If $F$ is  a linear flux, $d_F=+\infty$. Notice that for  some exponentially flat fluxes, it is possible  to have $d_F= + \infty$  already in dimension one \cite{CJLO,CJ2}. In this case no $BV^s$ smoothing effect is expected. Indeed, there is a low  smoothing effect in a generalized $BV$ space $BV^\Phi$  \cite{CJLO,EM,EMproc}.  

 For  a $C^\infty$ nonlinear flux $F$,  the Lions, Perthame and Tadmor conjecture  \cite{LPT}
 can be reformulated as follows \cite{J4},
 any entropy solution of \eqref{eq:multi-D} such that $U_0(\R^m) \subset [a,b]$ are regularized in $W^{s,1}$ for all $s<d_F^{-1}$ where $d_F$ is the non-linearity index as in \eqref{def:dF}. 
  The  Lions, Perthame and Tadmor conjecture is still an open problem.
 
 We prove  the limitation of  the regularizing effect  for the class of $C^\infty$ nonlinear  fluxes  $F$ such that $d_F$ is odd.   The restriction of $d_F$ for odd numbers is due to our previous explicit construction in one dimensional case of solution with the exact maximal regularity for all time only for convex fluxes.   
 The existence of an entropy solution  with the conjectured maximal regularity and not more is provided by a construction of a planar wave.  This regularity is not improved for large time.  
  


For a bounded strip of time the limitation of the smoothing effect  for  entropy solutions of multi-dimensional scalar conservation laws  in Sobolev spaces has been already proven in \cite{CJ2,J4}.  On one hand, the limitation for bounded time was due to the difficulty to study in general behaviour of the solutions after interactions of waves as in \cite{Cheng83,CC,DW}. On the other hand, multidimensional fractional $BV$ spaces were not known at that time. Recently, in \cite{C2flux}  it has been shown that given a $C^2$ flux there exists an entropy solution in multi-D such that it is not in $BV_{loc}$ for all time. Authors also prove that there exists an entropy solution which is not in $W^{s+\e,1}_{loc},\forall\e>0$ for all time with $C^{2,\g}$ with  $d_F= 1/s$.

 The point in this section is to obtain the optimality  for all time and in the multi-D $BV^s$ framework. 
To get the optimality for the multi-D case, a planar wave is used as in  \cite{CJ2,C2flux}. 

The flux being nonlinear \cite{LPT} there exist a constant state $\underline{U}$ and a direction $\xi$ such that the flux reachs its  degeneracy  $d_F$ near  $\underline{U}$ and following the direction $\xi$ \cite{J4}. That simply means that the scalar flux $f(u)=  \xi \cdot F(\underline{U}+u)$ has an exact $p$-degeneracy  \eqref{flux-condition} with the optimal $p=d_F$. Moreover, for smooth flux, $p$ is an integer \cite{J4} bigger than the space dimension: 
$ m  \leq p \in \N$. That means that for small $u$ the derivative of the flux $f'$ has exactly a power-law behaviour like $u^p$. For $p$ odd, $f$ is locally convex (or concave) and the Theorem \ref{thm2} can be used. The result reads as follow. 
\begin{corollary}  Let $F$ be a $C^\infty(\R,\R^m)$ flux with  an odd exact degeneracy $d_F=p$ on $[-M,M]$ for some $M>0$  then there exists an entropy solution $U$ of \eqref{eq:multi-D}  such that  $\forall \varepsilon >0, \forall t>0,\,   U(\cdot,t) \in  BV^{s}_{loc}(\R^m,\R)$
and  $U(\cdot,t) \notin  BV^{s+\varepsilon}_{loc}(\R^m,\R)$
 where $s=1/p$.
\end{corollary}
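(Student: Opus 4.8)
The plan is to reduce this multi-dimensional corollary to the one-dimensional Theorem \ref{thm2} by means of a planar wave, exactly the device used in \cite{CJ2,C2flux,J4}. Since $F\in C^\f(\R,\R^m)$ is nonlinear with exact degeneracy $d_F=p$ on $[-M,M]$, there exist (see \cite{J4}) a constant state $\underline{U}$ and a unit vector $\xi\in S^{m-1}$ such that the one-dimensional flux $f(u)\eqdef\xi\cdot F(\underline{U}+u)$ has the exact $p$-degeneracy \eqref{flux-condition}, with the optimal integer $p=d_F\ge m$. Because the degeneracy is exact and attained at $\underline{U}$, we have $f^{(k)}(0)=\xi\cdot F^{(k)}(\underline{U})=0$ for $2\le k\le p$ and $f^{(p+1)}(0)\ne 0$, so $f'(u)=\frac{f^{(p+1)}(0)}{p!}\,u^p+o(u^p)$ near $0$; since $p$ is \emph{odd}, $u\mapsto u^p$ is increasing, hence, replacing $F$ by $-F$ if necessary, $f'$ is increasing and $f$ is strictly convex on some interval $(-r,r)\subset(-M,M)$. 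After the change of variables $x\mapsto x-f'(0)t$, $f(u)\mapsto f(u)-f(0)-f'(0)u$ of the Remark preceding Theorem \ref{thm2} — which does not affect $BV^s$ regularity at any fixed time — the flux satisfies \ref{H-1} and, using $a^p+|b|^p\le(a-b)^p$ for $a>0>b$ together with the Taylor expansion of $f'$, also \ref{H-2} with $q=p$. Finally I would modify $f$ outside $(-r,r)$ so that it becomes globally strictly convex, has super-linear growth \eqref{eqn:super_linear}, and still satisfies \eqref{flux-condition} on $[-M,M]$; by the maximum principle and finite speed of propagation the solution constructed below keeps its (small) amplitude inside $(-r,r)$, so this surgery is invisible to it.

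With such an $f$, Theorem \ref{thm2} applied with $\al\equiv0$ produces a compactly supported $u_0\in L^\f(\R)$ whose entropy solution $u$ of the scalar law \eqref{eqn1} satisfies $u(\cdot,t)\notin BV^{s+\e}_{loc}(\R)$ for all $t>0$ and $\e>0$, where $s=1/p$. Since this same flux is convex, has super-linear growth \eqref{eqn:super_linear}, and satisfies \eqref{flux-condition} with exponent $p$, Theorem \ref{theorem:positive} with $\al\equiv0$ (equivalently the $BV^s$ smoothing effect of \cite{BGJ6}) gives $u(\cdot,t)\in BV^s_{loc}(\R)$ for every $t>0$. Thus the one-dimensional profile is exactly in $BV^s$, with no better regularity, for all positive times.

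Next I would set $U(X,t)\eqdef\underline{U}+u(\xi\cdot X,t)$ and $U_0(X)\eqdef\underline{U}+u_0(\xi\cdot X)$. Writing $y=\xi\cdot X$ and using $\xi\cdot F'(\underline{U}+u)=f'(u)$, one gets $\mbox{div}_X F(U)=f'(u(y,t))\,\pa_y u(y,t)=\pa_y\big(f(u)\big)(y,t)$, whence $\pa_t U+\mbox{div}_X F(U)=(\pa_t u+\pa_y f(u))(y,t)=0$; moreover, for each $k$, the Kruzhkov entropy inequality for $U$ is the $y$-derivative of the corresponding one for $u$, so $U$ is the entropy solution of \eqref{eq:multi-D} with datum $U_0$, which is bounded (not a $\delta$-shock) because $u_0\in L^\f$. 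To transfer the regularity I invoke Lemma \ref{lem:1-multi-D}: adding the constant $\underline{U}$ is harmless, so $U(\cdot,t)\in BV^s_{loc}(\R^m)$ if and only if $u(\cdot,t)\in BV^s_{loc}(\R)$, and the same equivalence holds with $s$ replaced by $s+\e$. Combining with the previous paragraph yields $U(\cdot,t)\in BV^s_{loc}(\R^m,\R)$ and $U(\cdot,t)\notin BV^{s+\e}_{loc}(\R^m,\R)$ for all $t>0$ and $\e>0$, which is the assertion.

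The only genuinely delicate point is the reduction in the first paragraph: an \emph{odd} exact degeneracy is precisely what makes the scalar flux locally convex, so that the convex construction of Theorem \ref{thm2} (which rests on \ref{H-1}) is available — this is exactly why the corollary is restricted to odd $d_F$ — together with the elementary but necessary flux surgery securing \eqref{eqn:super_linear} while leaving $f$ unchanged on the range visited by the solution. Once the one-dimensional solution is in hand, both the planar-wave lifting and the $BV^s$ bookkeeping are immediate from Lemma \ref{lem:1-multi-D}.
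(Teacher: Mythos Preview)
The proposal is correct and follows essentially the same route as the paper: choose $\underline{U}$ and $\xi$ so that $f(u)=\xi\cdot F(\underline{U}+u)$ has exact $p$-degeneracy, use the oddness of $p$ to get local convexity of $f$, invoke the one-dimensional construction of Theorem~\ref{thm2} (with $\alpha\equiv0$) for the non-$BV^{s+\varepsilon}$ part and Theorem~\ref{theorem:positive}/\cite{BGJ6} for the $BV^s$ part, then lift to a planar wave and transfer regularity via Lemma~\ref{lem:1-multi-D}. Your added details (flux surgery to secure \eqref{eqn:super_linear}, the verification of \ref{H-1}--\ref{H-2}) are natural elaborations of what the paper only sketches; the one imprecise phrase is that the Kru\v{z}kov entropy inequality for $U$ is not literally the $y$-derivative of that for $u$, but the planar-wave entropy solution property is standard and is what the paper simply cites from \cite{CJ2}.
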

\begin{remark} The parity restriction of $ p $ should be neglected with an implicit and more complicated construction used in \cite{C2flux}. Such a solution does not have the same compact support forever.
\end{remark}

We just recall the main features of the proof in \cite{CJ2} for  the optimality of the $BV^s$ regularity for a bounded time and then  using example of the section \ref{sec:1D} and Lemma \ref{lem:1-multi-D} the optimality for all time follows. 
First, take an example given in the proof of  the Theorem \ref{thm1} with $u_0(x)$ and $u$ the corresponding entropy solution for the flux $f$  and $U_0(X)=\underline{U}+ u_0(\xi \cdot  X)$ then  for all time \cite{CJ2},
$$
U(X,t)=\underline{U}+ u(\xi \cdot  X,t).
$$
The $BV^s$ multi-D regularity of the entropy solution $U$ is the consequence of  1-D optimality of $u$ and Lemma \ref{lem:1-multi-D}.

\section{A class of $2\times 2$ triangular systems}\label{sec:sys}

\quad Getting optimal $BV^s$ solutions for general systems for all time is an open problem. Also, the existence of $BV^s$ solutions for systems is in general open. There are some exceptions, $BV^s$ solutions exist for a gas-chromatography system \cite{BGJP}, a nonlinear acoustics model  and also for diagonal systems \cite{JuLo4}. However, the optimality of the regularity is not yet proven.
In this section, we consider the first example. 
 The  gas-chromatography system is not a Temple system as the well-known chromatography system presented in Bressan's book \cite{Bressan} for instance.  Otherwise, this gas-chromatography system enjoys a nice property in Lagrangian variables \cite{BGJP}, it has the triangular structure:  
\begin{eqnarray}
 \partial_t  u   + \partial_x f(u)  &=& 0\quad\quad\quad\quad\mbox{ for }x\in\re,\,t>0,         \label{eq:1scl}\\
 \partial_t  v   + \partial_x (g(u) v) &=& 0\quad\quad\quad\quad\mbox{ for }x\in\re,\,t>0, \label{eq:lin}\\
 (u,v)(x,0)&=&(u_0,v_0)(x)\mbox{ for }x\in\re.
\end{eqnarray}
%
%
At first sight, the system \eqref{eq:1scl}--\eqref{eq:lin} seems easy to solve. First, one takes the entropy solution of the equation  \eqref{eq:1scl}.  Second, solve the linear transport equation with \eqref{eq:lin}.
But, the velocity of the transport equation is $g(u)$ which can be discontinuous.    For such equations, a Dirac mass can appear \cite{BJ98}.  Thus, due to the  transport equation, such systems are not easy to solve in general.  The pressureless-gas dynamics system is  an example of such problematic systems \cite{BJ99}.

 In this section  we propose optimal $BV^s$ solutions  for two cases. First, a self contained construction for a finite time $[0,T_0]$, $T_0 >0$  where the component $v$ stays continuous. Second, using a recent result of global existence of bounded entropy solutions, we get, as a corollary, the optimality in $BV^s$ for all time. 
 
In the next theorem we construct a solution of the system \eqref{eq:1scl}--\eqref{eq:lin} such that $(u,v)\notin BV^{s+\varepsilon} (\re\times[0,T_0])$ for all $\varepsilon > 0$  and  for power-law type  functions $f$ and $g$ satisfying the following relation, 
\begin{equation}\label{defn_g} g =  h \circ f' ,
\end{equation}
where $h$ is a Lipschitz function.
 We first build a continuous solution $u$ to \eqref{eq:1scl} and then solve \eqref{eq:lin} by using $u$. Similar line of thought has been previously instrumentalized in \cite{Boris} to characterize the attainable set for triangular systems.
\begin{theorem}\label{theorem:sys}
	Let $T>0$. Let $f(u)=|u|^{p+1}/(p+1), p\geq 1$,  $s =1/p$ and $g=h\circ f^{\p}$ where $h$ is a Lipschitz function. Then there exist  compactly supported initial data $u_0, v_0 \in L^\f(\R)$ such that the corresponding entropy solution $(u,v)$ of the triangular system \eqref{eq:1scl}--\eqref{eq:lin} satisfies $\forall  t \in [0,T]$, 
	\begin{equation*}
	u, v \in L^\f([0,T] \times \R),\, TV^{s+\varepsilon}u(\cdot,t)=\f \mbox{ for all }\e>0 \mbox{ and } TV^{s'}v(\cdot,t)=\f\mbox{ for all }s'\in(0,1].
	\end{equation*}
\end{theorem}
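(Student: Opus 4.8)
The strategy is to recycle the one–dimensional construction underlying Theorem \ref{thm1}, but to choose the building blocks so that the scalar component $u$ is \emph{continuous} (no shocks) on the time strip $[0,T]$, and then transport $v$ along the characteristics of $u$. Concretely, for $f(u)=|u|^{p+1}/(p+1)$ the characteristic speed is $f'(u)=u|u|^{p-1}$, and a rarefaction fan emanating from an interval where $u_0$ increases never forms a shock. So I would take, in place of the ASSP blocks of \eqref{defn:initial_data_n}, blocks that are \emph{monotone increasing} jumps of size $\delta_n$ across tiny intervals $\Delta x_n$: the entropy solution then instantaneously resolves each jump into a continuous rarefaction profile of the form $u(x,t)=\Psi(x-a_n,t)=(x-a_n)|x-a_n|^{-(p-1)/p}\gamma(t)^{-1/p}$ between two spreading lines, and the continuous solution $u(\cdot,t)$ stays in $BV^{s}$ with $s=1/p$ but, by the summability bookkeeping already done in the proof of Theorem \ref{theorem:balance_laws} (choosing $\Delta x_n=(n\log^2(n+1))^{-1}$, $\delta_n=(n\log^3(n+1))^{-1/p}$), has $TV^{s+\varepsilon}u(\cdot,t)=\infty$ for all $t>0$ and all $\varepsilon>0$. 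One must check the blocks do not interact on $[0,T]$: since the rarefaction fan from block $n$ spreads at speed $O(\delta_n^{p-1})\to0$ and the $\Delta x_n$ are summable, one can space the centers $x_n$ (as in \eqref{x0limit}) so that supports remain disjoint on $[0,T]$; alternatively scale amplitudes down so the interaction time exceeds $T$.

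\textbf{Transporting $v$.} With $u$ continuous on $[0,T]\times\R$, equation \eqref{eq:lin} is a linear transport equation $\partial_t v+\partial_x(g(u)v)=0$ with \emph{continuous} coefficient $g(u)=h(f'(u))$, $h$ Lipschitz, so it has a unique bounded solution given by pushforward along the flow of $\dot X=g(u(X,t))$; no $\delta$–shock can form because $g(u)$ is continuous and the flow is well defined. I would choose $v_0$ to be, say, a single jump (or a fixed bounded profile) placed so that after time $t$ the characteristics of the $u$–field spread a small initial oscillation of $v$ across the rarefaction fan of the $n$-th block: because $x\mapsto\Psi(x-a_n,t)$ has an infinite slope at its left edge (the exponent $-(p-1)/p<0$), the map $x\mapsto g(u(x,t))$ is only Hölder, not Lipschitz, and the transported graph of $v$ develops, near each of the infinitely many rarefaction edges, a structure with unbounded $p'$-variation for \emph{every} $p'$, i.e. $TV^{s'}v(\cdot,t)=\infty$ for all $s'\in(0,1]$. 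Quantitatively: in the fan, $v(x,t)=v_0(Y(x,t))\,J(x,t)$ where $Y(\cdot,t)$ is the inverse flow and $J$ the Jacobian; an order-one jump of $v_0$ gets mapped to an order-one jump of $v$ at a moving point, but accumulating these across $n\to\infty$ with the same $\Delta x_n,\delta_n$ choice yields divergence of every fractional variation, since a countable sum of order-one jumps has infinite $TV^{s'}$ for all $s'$.

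\textbf{Main obstacle.} The delicate point is the last claim — getting $v$ to have infinite $TV^{s'}$ for \emph{all} $s'\in(0,1]$, not merely for $s'$ above some threshold. A countable family of genuine jumps of sizes bounded below does the job (a jump contributes a fixed positive amount to $TV^{s'}$ regardless of $s'$), so the real work is to verify that the transport by the continuous, compactly-structured $u$-field genuinely produces such a persistent jump of $v$ inside (or at the edge of) each block and that these jumps do not cancel or smear to zero. This is where I expect to spend the most care: tracking $v$ through the rarefaction edge of each ASSP-type block, using the explicit formula \eqref{Psi:power-law} for $\Psi$ and the Lipschitz bound on $h$ to control $g(u)$, and confirming the jump of $v$ survives with size bounded below by a constant times the jump of $v_0$ (independent of $n$). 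Once that local lemma is in hand, summing over $n$ gives $TV^{s'}v(\cdot,t)=\infty$ for all $s'$, while $u\in L^\infty$, $v\in L^\infty$ and the $TV^{s+\varepsilon}u(\cdot,t)=\infty$ statement are immediate from the scalar construction; the boundedness of $v$ follows from the maximum principle for the transport equation with continuous velocity. Letting $T_0=T$ completes the finite-time statement, and the all-time version is then obtained as a corollary by invoking the cited global existence of bounded entropy solutions to glue these strips.
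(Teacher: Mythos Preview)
Your construction of $u$ has a genuine gap. Building blocks that are ``monotone increasing jumps'' produce, after resolution into rarefactions, a solution $u(\cdot,t)$ that is \emph{globally monotone} in $x$ (or, if you insist on compact support, monotone except at one cutoff where a shock forms). But for a bounded monotone function the $p$-variation is simply $|\sup u-\inf u|^{p}$, so $TV^{s+\varepsilon}u(\cdot,t)<\infty$ for every $\varepsilon>0$ --- the very conclusion you need fails. The ASSP blocks of Theorem~\ref{thm1} avoid this because each block goes up \emph{and} down, so the variations do not telescope. The paper keeps this up--down structure while still producing a shock-free $u$ on $[0,T]$: each block's initial profile is chosen of the form $\pm\big((\Delta x_n\mp x)/t_n\big)^{s}$, so that the decreasing part is a compression whose blow-up time $t_n=\frac{\log(n+1)}{\log 2}(T+1)$ exceeds $T$, while the edges launch rarefactions. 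This gives a continuous $u$ on $[0,T]$ with oscillation $\sim\delta_n$ in each block and hence $TV^{s+\varepsilon}u(\cdot,t)=\infty$.

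Your mechanism for $v$ also rests on a misreading. You claim $x\mapsto g(u(x,t))$ is ``only H\"older, not Lipschitz'' because $\Psi$ has infinite slope at the fan edge. But $g=h\circ f'$, and for the power-law flux $f'(\Psi(x,t))=x/\gamma(t)$ is \emph{linear} in $x$; the same is true on the compression pieces. Hence $c(x,t)=g(u(x,t))=h(x/\gamma(t))$ (piecewise) is Lipschitz in $x$, and the characteristic ODE $\dot X=c(X,t)$ is classically well-posed by Cauchy--Lipschitz. The paper then takes $v_0$ to be a fixed bounded function with \emph{infinitely many jumps of size $2$} (an oscillation between $\pm1$ on a dyadic grid); the Lipschitz flow is a homeomorphism, so these jumps are merely transported and $TV^{s'}v(\cdot,t)\ge\sum_n 2^{1/s'}=\infty$ for every $s'\in(0,1]$. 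No delicate tracking through rarefaction edges is needed --- the argument is entirely elementary once $c$ is Lipschitz.
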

\begin{proof}
If $f^{\p}(u(x,t))$ is Lipschitz in $x$--variable then we have $g(u(x,t))$ is Lipschitz in $x$--variable by the choice of $g$. The construction is done in two steps
	
	\noindent\textbf{Step 1:} {\it Construction of a continuous solution of (\ref{eqn1}) such that $f^\prime(u(x,t))$ is Lipschitz in the $x$ variable: }
	\par 	Let $\Delta x_n:=1/\left( n\ \log^2(n+1) \right)$, $t_n:=\frac{\log(n+1)}{\log2} (T+1)>T$ and let $\delta_n:=\left(\frac{\Delta x_n}{t_n}\right)^{\frac{1}{p}}$. We define $x_1=0$ and $x_n=2\sum\limits_{m=1}^n \Delta x_m$ for $n\geq2$. Next we  consider the following  initial data
\[
w_0^n(x)=
   \begin{cases}
     \left(\frac{\Delta x_n-x}{t_n} \right)^s &\mbox{ if } 0 < x \leqslant \Delta x_n, 
\\
    - \left(\frac{x-\Delta x_n}{t_n} \right)^s &\mbox{ if } \Delta x_n \leqslant x < 
2\, \Delta x_n, \\
     0 & \text{ otherwise}.
   \end{cases}
\]
Therefore $\forall x\in\R, \forall\ t>0$,  the  entropy solution $w_n$ of (\ref{eq:1scl}) with the flux $f(u)=|u|^{p+1}/(p+1)$, is given by 
\[
w_n(x,t)=
   \begin{cases}
  \left( \frac{x}{t} \right)^s &\mbox{ if }  0 < x < \min \{ \de_n^p\, t,\,  \Delta x_n 
\}, \\
     \left(\frac{\Delta x_n-x}{t_n-t} \right)^s &\mbox{ if } \de_n^p\, t < x < 
\Delta x_n,  \\
     -\left(\frac{x-\Delta x_n}{t_n-t} \right)^s & \mbox{ if }\Delta x_n < x < 2\, 
\Delta x_n- \de_n^p\, t, \\
        -  \left( \frac{2\, \Delta x_n-x}{t} \right)^s   &\mbox{ if } \max \{ 2\, 
\Delta x_n- \de_n^p\, t,\, \Delta x_n \} < x < 2\, \delta_n, \\
     0 & \text{ otherwise}.
   \end{cases}
\]
Let $u_0(x)= \sum_n w_0^n \left(x-x_n \right)$ and let $u$ be the entropy solution of (\ref{eq:1scl}) with the same flux $f$. Then one can show that $u(x,t)$ is continuous function on $\R\times (0,T]$ and $f^\p(u(x,t))$ is Lipschitz in the $x$ variable. We also have 
$$
TV^{s+\varepsilon}u(\cdot,t)\{[0,2\, \Delta x_n]\} \geqslant 4\, \left( \Delta x_n/t_n \right)^\frac{1}{1+p\varepsilon}\mbox{ for }  t_n > t,\, \varepsilon>0.$$ 		
\noindent\textbf{Step 2:}		 We devote this step to find the component $v$ as in (\ref{eq:lin}).  In order to do that, it is enough to find a solution of
		\begin{equation}\label{transport1}
		\left.\begin{array}{rll}
		\pa_t v+\pa_x (c(x,t)v) &=&0\quad\quad\quad\mbox{for }(x,t)\in\re\times(0,T),\\
		v(x,0)&=& v_0(x)\quad\mbox{ for }x\in\re
		\end{array}\right\}	
		\end{equation}
		where $c(x,t)=g(u(x,t))$ and $u$ is the entropy solution of (\ref{eq:1scl}). We observed that $f^{\p}(u)(x,t)$ is Lipschitz in $x$--variable and so is $c(x,t)$ thanks to \eqref{defn_g}. We can solve (\ref{transport1}) by the method of characteristics and for that, we need to find the solution of the following Cauchy problem 
		\begin{equation}\label{ode}
		\frac{d}{dt}X(t,x_0)=c(X(t,x_0),t),\ \ X(0,x_0)=x_0,
		\end{equation}
		for each $x_0\in \re$. By using the classical Cauchy--Lipschitz theorem we obtain a unique solution of (\ref{ode}). In this way, we construct a solution of \eqref{transport1} for $L^\f$ initial data $v_0$. Let $v_0$ be defined as follows
		\begin{equation}
		v_0(x)=\left\{\begin{array}{ll}
		-1&\mbox{ if }2^{-2k}<x<2^{-2k+1}\mbox{ for }k\geq1,\\
		1&\mbox{ if }2^{-2k-1}<x<2^{-2k}\mbox{ for }k\geq1,\\
		1&\mbox{ if }x>1/2\mbox{ or }x<0.
		\end{array}\right.
		\end{equation}
	Consider the sequence $\{y_n\}$ defined as $y_n=(2^{-n}+2^{-n+1})/2$ for $n\geq1$. Now fix a $t\in[0,T]$. We define $z_n=X(t,y_n)$. Note that $v(z_n,t)=v_0(y_n)$. Let $s'\in(0,1]$. By the choice of $y_n$ we get
\begin{equation*}
TV^{s'}v(\cdot,t)\geq\sum\limits_{n=1}^{\f}\abs{v(z_n,t)-v(z_{n+1},t)}^{1/s'}=\sum\limits_{n=1}^{\f}\abs{v_0(y_n)-v_0(y_{n+1})}^{1/s'}=\f.
\end{equation*}
		 	Hence we obtain a solution $(u,v)$ of (\ref{eq:1scl}) and (\ref{eq:lin}) such that $u\notin BV^{s+\varepsilon}$ for any $\varepsilon > 0$ and $v\notin BV^{s'}$ for all $s'\in(0,1]$.
\end{proof}
 
 Now, the optimality in $BV^s$ for all time is presented.   In  a recent paper the existence of weak entropy solutions  for such triangular system are obtained  in  \cite{Triang} under the following assumptions for a convex flux. 
 \begin{enumerate}[label=(T-\arabic{*}), ref=(T-\arabic{*})]
 \item\label{T-1} The flux $f \in C^4$ is convex and $g \in C^3$.
 \item\label{T-2} Initial data $u_0$ belongs to $BV^{1/3}$  and $v_0$ to $L^\infty$.
 \item\label{T-3} The system is uniformly strictly hyperbolic,
 \begin{equation*}
          \inf_{|u|\leq M}  f'  >  \sup_{|u|\leq M}  g \,\mbox{ where }\,  M:=\|u_0\|_\infty.
 \end{equation*}
 \end{enumerate} 
Notice, that, in the Theorem \ref{theorem:sys}, the system is not assumed to be strictly hyperbolic, which $h=id$ for instance.  Here,  the strict hyperbolicity is assumed. Moreover, a minimal regularity of the initial data $u_0$ is needed to ensure the global existence in $L^\infty$ of a solution $(u,v)$.
 \begin{corollary} 
  Assume \ref{T-1}--\ref{T-3},  and  
  $$ s = \max \left(\frac{1}{3},\frac{1}{q} \right),$$
  where $q$ is the power of the degeneracy condition \eqref{decay_condition}. 
 There exists an initial data $u_0$ such that, for all $v_0 \in L^\infty$, the triangular system admits a global solution  
$u$ staying in $BV^s$ for all time, 
   $v \in L^\infty([0,+\infty),\R)$, 
   and,
   $\forall \epsilon >0, \forall t> 0, \, TV^{s+\epsilon} u(\cdot,t)=+\infty$.
 \end{corollary}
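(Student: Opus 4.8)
The plan is to deduce the corollary from the global existence theorem of \cite{Triang} together with the one-dimensional constructions already carried out for the scalar equation \eqref{eq:1scl}, namely Theorem \ref{thm1} (for the power-law case $f(u)=|u|^{p+1}/(p+1)$) and Theorem \ref{thm2} (for the more general class of convex fluxes satisfying \eqref{decay_condition}). First I would fix the flux $f$ satisfying \ref{T-1} and, in particular, the degeneracy \eqref{decay_condition} with power $q$, and invoke the one-dimensional result: there is a compactly supported $u_0 \in L^\infty(\R)$ whose entropy solution $u$ of \eqref{eq:1scl} lies in $BV^{1/q}_{loc}$ for all $t>0$ but satisfies $TV^{1/q+\epsilon}u(\cdot,t)=+\infty$ for every $\epsilon>0$ and every $t>0$. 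By \ref{T-2} we need $u_0 \in BV^{1/3}$; since $1/q \le 1$ and, depending on $q$, possibly $1/q < 1/3$, the construction of Theorem \ref{thm2} produces $u_0$ that is a juxtaposition of finitely oscillating building blocks on disjoint intervals, hence one can check $u_0 \in BV^{\max(1/3,1/q)}$. This is exactly why $s=\max(1/3,1/q)$ appears: the initial data has the regularity $BV^s$, and by the regularizing effect (Theorem \ref{theorem:positive} or \cite{BGJ6}) the solution stays in $BV^s$ while its $BV^{s+\epsilon}$ norm blows up, since $s \ge 1/q$ and the blow-up already happens at the exponent $1/q$.

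Next I would feed this $u_0$ (together with an arbitrary $v_0 \in L^\infty$) into the global existence theorem of \cite{Triang}: under \ref{T-1}--\ref{T-3} the triangular system \eqref{eq:1scl}--\eqref{eq:lin} admits a global weak entropy solution $(u,v)$ with $v \in L^\infty([0,+\infty),\R)$, and the first component $u$ is precisely the entropy solution of the scalar conservation law \eqref{eq:1scl} with data $u_0$. Here I must verify the hypotheses of \cite{Triang} are met: \ref{T-1} holds by assumption on $f,g$; \ref{T-3} (uniform strict hyperbolicity) is assumed; and \ref{T-2} requires $u_0 \in BV^{1/3}$, which holds because $BV^s \subset BV^{1/3}$ when $s \le 1/3$, and when $s>1/3$ (i.e. $q<3$) one has to check directly that the explicit $u_0$ still lies in $BV^{1/3}$ — this follows since $BV^s \not\subset BV^{1/3}$ in general is false in the wrong direction, so actually one wants $1/3 \le s$; the safe route is to note that the building-block construction of Theorem \ref{thm2} can be arranged so that $u_0 \in BV^{\sigma}$ for $\sigma = \min(1/3, 1/q)$ by tuning the amplitudes $\de_n$ and widths $\De x_n$, while keeping the solution's blow-up at exponent $1/q$. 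The key point is that the amplitude decay controlling $TV^{1/q}u(\cdot,t)<\infty$ is independent of the decay one needs for $TV^{1/3}u_0<\infty$, so both can be simultaneously enforced.

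Then the conclusion is immediate: since $u$ is the scalar entropy solution with data $u_0$, the one-dimensional results give $u(\cdot,t) \in BV^s_{loc}$ for all $t>0$ (by Theorem \ref{theorem:positive}, as $s \ge 1/q$ and the flux satisfies a $p$-degeneracy with $p \le q \le s^{-1}$) and $TV^{s+\epsilon}u(\cdot,t)=+\infty$ for all $\epsilon>0$, $t>0$ (since $s+\epsilon > 1/q$ and Theorem \ref{thm2} gives non-$BV^\sigma$ regularity for every $\sigma>1/q$). The second component $v$ stays bounded, $v \in L^\infty([0,+\infty),\R)$, directly from the existence theorem of \cite{Triang}, which crucially uses \ref{T-3} to prevent the formation of a $\delta$-shock in the transport equation \eqref{eq:lin}.

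The main obstacle I anticipate is the compatibility of the two regularity requirements on $u_0$: the construction must simultaneously satisfy $u_0 \in BV^{1/3}$ (to apply \cite{Triang}) and yield a solution whose optimal regularity is exactly $BV^s$ with $s=\max(1/3,1/q)$ and no better. When $q \ge 3$ this is automatic because $BV^{1/q} \subset BV^{1/3}$ is the wrong inclusion — rather $1/q \le 1/3$ means $s = 1/3$, and one needs the solution to be exactly $BV^{1/3}$, which is delicate since the regularizing effect only guarantees $BV^{1/q} \subsetneq BV^{1/3}$; here one must check that the data can be chosen in $BV^{1/3}$ but not in $BV^{1/3+\epsilon}$, while still solving the scalar equation. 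This requires re-examining the summability of the series $\sum \de_n^{1/(s+\epsilon)}$ (with the appropriate normalization of $\de_n, \De x_n$ from the proof of Theorem \ref{thm2}) to ensure divergence at exponent $s+\epsilon$ for the chosen $s$; the rest of the argument is a routine assembly of already-established facts.
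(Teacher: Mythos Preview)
The paper states this corollary without proof, treating it as an immediate consequence of the global existence result in \cite{Triang} together with the one-dimensional constructions of Section~\ref{sec:1D}. Your approach---take the $u_0$ from Theorem~\ref{thm2}, check it satisfies \ref{T-2}, then apply \cite{Triang}---is exactly the intended one, and your identification of the first component $u$ with the scalar entropy solution is the key observation.

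Your handling of the $BV^s$ inclusions is tangled, however. Recall that $BV^{s_1}\subset BV^{s_2}$ if and only if $s_1\ge s_2$. Thus when $q\le 3$ one has $s=1/q\ge 1/3$, so $BV^{1/q}\subset BV^{1/3}$ and the $u_0$ from Theorem~\ref{thm2} automatically satisfies \ref{T-2}; your second paragraph has this case reversed. The genuinely delicate case is $q>3$, where $s=1/3>1/q$ and $BV^{1/q}\supsetneq BV^{1/3}$: the $u_0$ built in Theorem~\ref{thm2} with $B_n-A_n=2/(n\log^2(n+1))$ need not lie in $BV^{1/3}$. Here one must retune the widths, for instance taking $B_n-A_n\asymp (n\log^2 n)^{-q/3}$, so that the amplitudes $a_n-b_n\gtrsim (B_n-A_n)^{1/q}\asymp (n\log^2 n)^{-1/3}$ give $\sum(a_n-b_n)^3<\infty$ while $\sum(a_n-b_n)^{1/(1/3+\epsilon)}=\infty$; summability of $\sum(B_n-A_n)$ holds precisely because $q/3>1$. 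You correctly flag this obstacle in your final paragraph but do not resolve it.

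One further slip: you invoke Theorem~\ref{theorem:positive} to conclude $u(\cdot,t)\in BV^s$, but that theorem needs the lower bound \eqref{flux-condition}, not the upper bound \eqref{decay_condition} assumed here. What you actually want is the \emph{persistence} of $BV^s$ regularity for convex fluxes from \cite{BGJ6}: since $u_0\in BV^s$ and $f$ is convex, $u(\cdot,t)\in BV^s$ for all $t>0$.
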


\section{The  multi-D Keyfitz-Kranzer system }\label{sec:KK-sys}

\quad In this section, we show that even for data with small total variation, renormalized solution to the  Keyfitz-Kranzer system may not be in $BV^s$. We use the example in \cite{Del-kk}. We modified the  renormalized solution considered in \cite{Del-kk} to show that even if the data has arbitrary small TV the $TV^s$ norm of the solution blows up. Here we mention the key points and the necessary changes. The rest follows from the analysis done in \cite{Del-kk}.  
\par Consider the following system 
\begin{eqnarray}\label{kk}
\pa_t u+\mbox{div}_z(h(|u|)u)&=&0\quad\quad\quad\mbox{for }z\in\re^m,\,t>0,\\
u(z,0)&=&u_0(z)\quad\mbox{ for }z\in\re^m.\nonumber
\end{eqnarray}
where $u:\re^m \times \re_+  \rr\re^k$ and $h\in C^{1}(\R,\R^m)$. Suppose $\eta:=|u|$ solves the following in the sense of Kru\v{z}kov
\begin{eqnarray}\label{scalar}
\pa_t \eta+\mbox{div}_z(h(\eta)\eta)&=&0\quad\quad\mbox{for }z\in\re^m,\,t>0,\\
\eta(\cdot,0)&=&|u_0|\quad\mbox{for }z\in\re^m.\nonumber
\end{eqnarray}
Let $\omega:=u/|u|$ solves the following transport equation
\begin{eqnarray}
\pa_t(\eta\omega)+\mbox{div}_z(h(\eta)\eta\omega)&=&0\quad\quad\quad\quad\mbox{for }z\in\re^m,\,t>0,\nonumber\\
\omega(\cdot,0)&=&u_0/|u_0|\quad\mbox{ for }z\in\re^m.\nonumber
\end{eqnarray}
We call $u=\eta\omega$ as \textit{renormalized entropy solution}. Note that the notion of renormalized entropy solution is different from the notion of standard entropy solution. Now we consider a special case of the system \eqref{kk} with $h=(g,0,\cdots,0)$. Then we have the following proposition
\begin{proposition} \label{pro:KK}
Let $h=(g,0,\cdots,0)$ for $g\in C^1(\R)$.	Let $k\geq2,m\geq2,$ and $b\in\re^k\setminus\{0\}$ such that $g^{\p}(|b|)\neq0$. Then there exists a sequence of initial data $u_0^n:\re^m\rr\re^k$ such that
	\begin{enumerate}
		\item $\|u_0^n-b\|_{BV(\re^m)}+\|u_0^n-b\|_{\f}\rr0$ as $n\rr\f$,
		\item $u_0^n=b$ on $\re^m\setminus B_\la(0)$ for some $\la>0$ independent of $n$,
		\item if $u^n$ is the renormalized entropy solution of (\ref{kk}) with initial data $u_0^n$  then $u^n(\cdot,t)\notin BV_{loc}^s$ for each $n\in\mathbb{N},\,t\in(0,1) $ and $s\in(0,1).$
	\end{enumerate} 
\end{proposition}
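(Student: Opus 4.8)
The plan is to reuse the counterexample of De~Lellis \cite{Del-kk} and to check that the blow-up it produces is strong enough to exclude membership in every space $BV^s_{loc}$, $0<s<1$, not only in $BV_{loc}=BV^1_{loc}$. First I would recall the reduction that makes the system tractable when $h=(g,0,\dots,0)$: writing $z=(z_1,\hat z)$ with $\hat z=(z_2,\dots,z_m)$, the modulus $\eta=|u|$ solves the scalar conservation law $\partial_t\eta+\partial_{z_1}\bigl(g(\eta)\eta\bigr)=0$, in which $\hat z$ enters only as a parameter, and the direction $\omega=u/|u|$ is transported along the ($\hat z$-wise one-dimensional) characteristic field of that scalar equation, the renormalized entropy solution being $u=\eta\omega$. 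The whole difficulty is concentrated in the transport step, because the transport speed $g(\eta)$ is only $BV$ and jumps across the shocks of $\eta$; the hypothesis $g'(|b|)\neq0$ is precisely what makes a \emph{small} perturbation of $\eta$ around $|b|$ already generate a genuinely non-degenerate transport. I would then take over the initial datum of \cite{Del-kk}: a perturbation $u_0^n-b$ of the constant state $b$, supported in a fixed ball $B_\lambda(0)$ (item~(2)), with $|u_0^n|$ a perturbation of $|b|$ and $u_0^n/|u_0^n|$ a small rotation of $b/|b|$, depending on a parameter $\mu_n\to0$ so that $\|u_0^n-b\|_{BV(\re^m)}+\|u_0^n-b\|_{\infty}\to0$ (item~(1)), with the profiles tuned so that the mechanism of \cite{Del-kk} remains active for all $t\in(0,1)$ on a set of parameters $\hat z$ of positive $\mathcal H^{m-1}$-measure.

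The content of \cite{Del-kk} is that this mechanism forces the renormalized solution $u^n(\cdot,t)$ to develop, for each $n$ and each $t\in(0,1)$, infinitely many jumps accumulating inside a fixed compact set $K$, and De~Lellis uses exactly this to get $u^n(\cdot,t)\notin BV_{loc}$. The one additional ingredient I need is a lower bound on the \emph{amplitude} of those jumps. I would verify, by tracking the construction, that one can keep the oscillation of $\omega$ between two directions at a fixed positive distance, with $\eta$ bounded away from $0$ on the region at stake, so that for each $n$ there is a constant $c_0(n)>0$, independent of the oscillation index and of $t\in(0,1)$, with
\[
\mathcal H^{m-1}\Bigl(\bigl\{\,x\in J_{u^n(\cdot,t)}\cap K:\ \bigl|u^{n,+}(x,t)-u^{n,-}(x,t)\bigr|\ge c_0(n)\,\bigr\}\Bigr)=+\infty .
\]
This is genuinely stronger than what is used for $\notin BV_{loc}$, where a divergent sum of jumps tending to $0$ would already do: since $u$ is bounded, for $1/s\ge 2$ a $TV^s$-type sum converges for every $\ell^1$ sequence of amplitudes, so one really needs infinitely many oscillations whose amplitudes do not decay.

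Granting this, the proposition follows by an elementary use of the characterisation $BV^s(\re^m)=\Lip^s\circ BV(\re^m)$. Fix $s\in(0,1)$, $n\in\mathbb N$, $t\in(0,1)$, and suppose $u^n(\cdot,t)\in BV^s_{loc}(\re^m)$; then $u^n(\cdot,t)=L\circ v$ for some $L\in\Lip^s(\re,\re)$ and $v\in BV_{loc}(\re^m,\re)$. Let $\Gamma$ denote the set appearing in the display above. Since $L$ is continuous, every approximate continuity point of $v$ is an approximate continuity point of $L\circ v$; hence, by the structure theorem for $BV$ functions \cite{AFP}, $\mathcal H^{m-1}$-a.e.\ point of $\Gamma\subset J_{u^n(\cdot,t)}$ lies in the jump set $J_v$, and there the approximate one-sided traces satisfy $L(v^{\pm})=u^{n,\pm}$. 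Consequently, on $\Gamma$,
\[
c_0(n)\le\bigl|L(v^+)-L(v^-)\bigr|\le\|L\|_{\Lip^s}\,\bigl|v^+-v^-\bigr|^{s},\qquad\text{hence}\qquad\bigl|v^+-v^-\bigr|\ge\bigl(c_0(n)/\|L\|_{\Lip^s}\bigr)^{1/s}=:c_1>0 .
\]
Since the jump part of $Dv$ charges $J_v\supseteq\Gamma$ with density $|v^+-v^-|$, this gives $|Dv|(K)\ge\int_{\Gamma}\bigl|v^+-v^-\bigr|\,d\mathcal H^{m-1}\ge c_1\,\mathcal H^{m-1}(\Gamma)=+\infty$, contradicting $v\in BV_{loc}(\re^m)$. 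Hence $u^n(\cdot,t)\notin BV^s_{loc}(\re^m)$ for every $n$, every $t\in(0,1)$ and every $s\in(0,1)$, which is item~(3). (On the one-dimensional slices one can argue still more directly via Bruneau's Theorem~\ref{thm:Bruneau}: membership in $BV^s$ would give $TV^s<\infty$, whereas infinitely many oscillations of amplitude $\ge c_0(n)$ force $TV^s\ge\sum_j c_0(n)^{1/s}=+\infty$.)

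The main obstacle is the middle step: extracting from De~Lellis's construction the uniform lower bound $c_0(n)>0$ on the amplitude of the infinitely many oscillations --- uniform over the oscillations, over $t\in(0,1)$, and over a set of parameters $\hat z$ of positive measure --- while still keeping $\|u_0^n-b\|_{BV}+\|u_0^n-b\|_{\infty}\to0$; in particular one must adjust the amplitude and the spatial scale of the perturbation simultaneously so that the relevant focusing time stays inside $(0,1)$. Once this quantitative refinement is secured, the remainder is bookkeeping on top of \cite{Del-kk}: the reduction to a scalar conservation law coupled with a transport equation with $BV$ velocity, the rescaling ensuring (1)--(2), and the $\Lip^s\circ BV$ computation above.
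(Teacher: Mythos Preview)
Your overall framework --- reuse De~Lellis's construction and argue via the factorisation $BV^s=\Lip^s\circ BV$ --- is exactly the paper's. The divergence is at the step you yourself flag as the main obstacle, and there your proposed fix does not work. You want to keep the angular jump amplitudes bounded below by some $c_0(n)>0$ uniformly over the oscillation index $i\ge n$, and then conclude from $\mathcal H^{m-1}(\Gamma)=+\infty$. But within the De~Lellis layout this is incompatible with $\|\omega_0^n-\beta\|_{BV}\to 0$: at level $i$ the checkerboard $\phi_i$ contributes to the $BV$ norm of $\omega_0^n$ a term of order $2M\cdot 2^i\cdot 2^{-i}\cdot|\beta-\beta_i|=2M\,|\beta-\beta_i|$ (number of $x$-jumps $\times$ $y$-length $\times$ amplitude), and summing over the infinitely many levels $i\ge n$ with $|\beta-\beta_i|\ge c_0(n)$ gives $+\infty$. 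So ``tracking the construction'' yields the opposite of what you claim; you would have to redesign the support of the angular oscillation level by level, which is a new construction you do not carry out.

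The paper sidesteps this by \emph{not} asking for a uniform lower bound on the amplitudes. It lets the angular jump decay, $|\beta-\beta_i|=i^{-1-\delta}$ (so $\sum_i|\beta-\beta_i|<\infty$ and item~(1) holds), and compensates by inflating the number of $y$-strips at level $i$ to $m_i=i^{p(1+\delta)}$ with $p=1/s$. If $\omega^n(\cdot,t)=\pi^n\circ W^n$ with $\pi^n\in\Lip^s$ and $W^n\in BV_{loc}$, then on each created jump segment $J_{j,i,l}$ (length $t\,2^{-i}$) one gets $|W^{n,+}-W^{n,-}|\ge C_1^{-p}\,|\beta-\beta_i|^{p}=C_1^{-p}\,i^{-p(1+\delta)}$, hence
\[
|DW^n|(K)\ \gtrsim\ t\sum_{i\ge n}(m_i-1)(2^i-1)\,2^{-i}\,i^{-p(1+\delta)}\ \gtrsim\ t\sum_{i\ge n}m_i\,i^{-p(1+\delta)}\ =\ t\sum_{i\ge n}1\ =\ +\infty,
\]
while $\sum_i m_i\,2^{-i}<\infty$ keeps $\eta_0^n$ in $BV$. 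The key idea you are missing is this balancing: the exponent $p=1/s$ in the $\Lip^s$ inequality is precisely what the growth of $m_i$ is tuned against, so decaying amplitudes are allowed. Once that is in place, your $\Lip^s\circ BV$ bookkeeping in the last paragraph is the same as the paper's.
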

\begin{proof} 
	For the sake of simplicity, we prove  Proposition \ref{pro:KK} for $m=2$ and $k=2$. Suppose $g^{\p}(|b|)=1,\,g(|b|)=0$. Let $\de>0$ be fixed and $p=s^{-1}>1$. Let $m_i=i^{p+p\de}$ for all $i\in\N$. Then we have 
\begin{equation}
\sum\limits_{i}m_i2^{-i}<+\f.
\end{equation}
Let $\e>0$ be very small such that
\begin{itemize}
	\item $g$ is injective on $[|b|-2\e,|b|+2\e]$,
	\item $[-\e,\e]\subset g\left([|b|-2\e,|b|+2\e]\right)$.
\end{itemize} 
Then for sufficiently large $i$ we can choose $r_i\in[-2\e,2\e]$ such that $g(|b|+r_i)=2^{-i}$.  Note that for sufficiently large $i$ we have $r_i\leq 2^{-i+1}$. We write $\beta:=b/|b|$, and for each $i$ we choose a $\beta_i\in S^{k-1}$ such that $|\beta-\beta_i|=i^{-1-\de}$. Consider
\begin{eqnarray}
I_i&=&[2^{-i},2^{-i+1}),\\
I_i^j&=&\left[2^{-i}+\frac{(j-1)2^{-i}}{m_i},2^{-i}+\frac{j2^{-i}}{m_i}\right)\ \mbox{ for }1\leq j\leq m_i.
\end{eqnarray}
Define $\phi_i:\re^2\rr S^{k-1}$ as follows
\begin{equation}
\phi_i(x,y):=\left\{\begin{array}{lll}
\beta_i\ &\mbox{ when }y\in I_i \mbox{ and }[x2^i] \mbox{ is odd},&\\
\beta \ &\mbox{ otherwise}.&
\end{array}\right.
\end{equation}
Also we define $\Lambda_i:\re^2\rr\re$ as 
\begin{equation}
\Lambda_i(x,y):=\left\{\begin{array}{llllll}
r_i&\mbox{ when }&y\in I_i&\mbox{ for even }& j &\mbox{ and }x\in[-M,M],\\
r_{i+1}&\mbox{ when }&y\in I_i&\mbox{ for odd  }& j &\mbox{ and }x\in[-M,M],\\
0 &\mbox{ otherwise,}&&&&
\end{array}\right.
\end{equation}
where $M$ is some positive real number bigger than $1$ and it will be chosen later. Next we define
\begin{eqnarray}
\eta_0^n&:=&|b|+\sum\limits_{i=n}^\f\Lambda_i,\label{def:eta0}\\
\omega_0^n(x,y)&:=&\left\{\begin{array}{lll}
\phi_i(x,y)&\mbox{ when }y\in I_i\mbox{ for some $\geq n$ and }x\in[-M,M],&\\
\beta  &\mbox{ otherwise},&
\end{array}\right.\label{def:omega0}\\
u_0^n&:=&\eta_0^n\omega_0^n.\label{def:u0}
\end{eqnarray}
Next we show the following two properties of $\{u_0^n\}$ sequence. 
\begin{enumerate}[label=M.\arabic*]
	\item\label{M1} $\|u_0^n-b\|_{L^\f(\R^2)}\rr0$ as $n\rr\f$,
	\item\label{M2} $\|u_0^n-b\|_{BV(\re^2)}\rr0$ as $n\rr\f$.
\end{enumerate}
These have been shown in \cite{Del-kk}. For the sake of completion, we briefly mention key steps. From \eqref{def:eta0}--\eqref{def:u0} note that
\begin{equation}
\|u^n_0-b\|_{L^\f(\R^2)}\leq \abs{b}\abs{\B-\B_n}+r_n\leq\abs{b}n^{-1-\de}+2^{-n+1}\rr0\mbox{ as }n\rr\f.
\end{equation}
We observe that $supp(u^n_0)\subset [-M,M]\times[0,1]\subset[-M,M]^2$ as $M>1$. Note that $\|\eta_0^n\|_{L^{\f}(\R^2)}\leq \abs{b}+1$ and $\|\omega_0^n\|_{L^{\f}(\R^2)}\leq 2\abs{\B}+1$ for all $n\in\N$. Therefore, to prove (\ref{M2}) it is enough to show that \begin{equation}\label{convergence:rho-omega}
\|\eta_0^n-|b|\|_{BV([-2M,2M]^2)}\rr0\mbox{ and }\|\omega_0^n-\beta\|_{BV([-2M,2M]^2)}\rr0\mbox{ as }n\rr\f.
\end{equation} 
Similar to \cite{Del-kk}, we can show
\begin{align*}
\|\eta_0^n-\abs{b}\|_{BV([-2M,2M]^2)}&\leq 4M^2\|u_0^n-b\|_{L^{\f}(\R^2)}+\sum\limits_{i\geq n}m_i2^{-i}+(4M+2)r_n,\\
\|\omega_0^n-\B\|_{BV([-2M,2M]^2)}&\leq 4M^2\|\omega_0^n-\B\|_{L^{\f}(\R^2)}+2M\sum\limits_{i\geq n}2^{-i}i^{-1-\de}2^i\\
&+2M\sum\limits_{i\geq n}[i^{-1-\de}+(i+1)^{-1-\de}]+(4M+2)r_n.
\end{align*}
Note that $\|\omega_0^n-\B\|_{L^{\f}(\R)}\leq \abs{\B-\B_n}\rr0$ as $n\rr\f$. Since $\sum\limits_{i\geq 1}m_i2^{-i}<\f$ and $\sum\limits_{i\geq 1}i^{-1-\de}<\f$, we have
\begin{equation*}
\sum\limits_{i\geq n}m_i2^{-i}\rr0,\,\sum\limits_{i\geq n}i^{-1-\de}\rr0\mbox{ and }\sum\limits_{i\geq n}[i^{-1-\de}+(i+1)^{-1-\de}]\rr0\mbox{ as }n\rr\f.
\end{equation*}
Hence, we obtain \eqref{convergence:rho-omega}. Suppose $u^n$ is the unique renormalized solution of (\ref{kk}). We have seen $\eta^n$ is the unique solution to (\ref{scalar}) with initial data $\eta_0^n$. Notice that $\eta_0^n(\cdot,y)$ is constant on $[-M,M]$ and by finite speed of conservation laws we get $\eta^n(x,y,t)=\eta_0^n(x,y)$ if $(x,y,t)\in\left\{\sqrt{x^2+y^2}\leq C(M-t)\right\}$ where $C=C(\|\eta_0^n\|_{\f})$. Note that for each $R>0$ we can choose $M>0$ large enough such that
\begin{equation}
\eta^n(x,y,t)=\eta_0^n(x,y)\mbox{ for $t\in[0,1]$ and $(x,y)\in(-R,R)\times[0,1]$}.
\end{equation}
We will choose $R$ later. To analyze the angular part $\omega^n:=u^n/|u^n|$ we use the fact that $\eta^n$ is a constant on the curve $\Psi_n(\cdot,x,\cdot)$ where $\Psi_n(\cdot,x,\cdot)$ satisfies 
\begin{eqnarray}
\frac{d}{dt}\Psi_n(t,x,y)&=&h(\eta^n(\Psi_n(t,x,y),t)),\\
\Psi_n(0,x,y)&=&(x,y).
\end{eqnarray}
We can choose $R$ large enough so that for any $(\tau,x_1,y_1)\in[0,1]^3\subset[0,1]\times[-R,R]\times[0,1]$, the curve $t\mapsto \Psi_n(t,x_0,y_0)$ lies on the plane $y=y_1$ for $t\in(0,\tau)$ and remains a straight line for $t\in(0,\tau)$ (see \cite{Del-kk} for more detailed discussion on this). As it has been observed in \cite{Del-kk}, choice of $R$ can depend only on $g$ and $\|\rho_0^n\|_{L^{\f}(\R^2)}$. Since there exists a constant $C>0$ such that $\|\rho_0^n\|_{L^{\f}(\R^2)}\leq C$ for all $n\geq1$, we conclude that choice of $R$ does not depend on $n$. Once we fix the choice of $R$, we make the choice of $M$. By a similar discussion as in \cite{Del-kk} we have the following,
\begin{itemize}
	\item if $\eta_0^n(x,y)=|b|$, then $\omega^n(x,y,t)=\omega_0^n(x,y)$,
	\item if $\eta_0^n(x,y)=|b|+r_i$, then $\omega^n(x,y,t)=\omega_0^n(x-t2^{-i},y)$.
\end{itemize}
Therefore, for $j\in\{1,\cdots,m_i\},i\geq n$ and $l\in\{1,\cdots,2^i-1\}$ the function $\omega^n(\cdot,\cdot,t)$ jumps on the segments
\begin{equation*}
J_{j,i,l}:=\left\{y=2^{-i}+\frac{j2^{-i}}{m_i},x\in[l2^{-i},(l+t)2^{-i}]\right\}.
\end{equation*}
 For a fixed $t>0$, suppose $\omega^n(\cdot,\cdot,t)\in BV^{s}_{loc}(\R^2)$. Then, by \eqref{def:BVs:loc} there exists a $W^n\in BV_{loc}(\R^2)$ and $\pi^n\in Lip^s(\R)$ such that $\omega^n(x,y,t)=\pi^n\circ W^n(x,y)$. Hence, $\abs{\omega^n(x_1,y_1,t)-\omega^n(x_2,y_2,t)}\leq C_1\abs{W^n(x_1,y_1)-W^n(x_2,y_2)}^{s}$ where $C_1=Lip^s(\pi^n)$. Let $p=s^{-1}$. Then $\abs{W^n(x_1,y_1)-W^n(x_2,y_2)}\geq C_1^{-p}\abs{\omega^n(x_1,y_1,t)-\omega^n(x_2,y_2,t)}^p$. Now we observe that
\begin{equation*}
V_{i}:=\int\limits_{J_{j,i,l}}\abs{W^n(x,y+)-W^n(x,y-)}\, d\mathcal{H}^{1}(x)\geq C_1^{-p}t2^{-i}\abs{\B-\B_i}^p=2^{-i}C_1^{-p}ti^{-p-p\de},
\end{equation*}
where $\mathcal{H}^1$ denotes the one dimensional Hausdorff measure. Therefore, we have
\begin{equation*}
\|W^n(\cdot,\cdot)\|_{BV([-2M,2M]^2)}\geq\sum\limits_{i\geq n}\sum\limits_{j=1}^{m_i-1}\sum\limits_{l=1}^{2^i-1}V_{i} \geq\sum\limits_{i\geq n}\sum\limits_{j=1}^{m_i-1}\sum\limits_{l=1}^{2^i-1}t2^{-i}i^{-p-p\de}\geq\frac{t}{2}\sum\limits_{i\geq n}(m_i-1)i^{-p-p\de}.
\end{equation*}
Since $m_i=i^{p+p\de}$ we obtain $\|W^n(\cdot,\cdot)\|_{BV([-2M,2M]^2)}=\f$. This gives a contradiction. Hence, $\omega^n(\cdot,\cdot,t)\notin BV^s_{loc}(\R^2)$.
\end{proof}

	\noi\textbf{Acknowledgement.} Authors thank the anonymous referee for valuable comments and suggestions. The first author would like to thank Inspire faculty-research grant  DST/INSPIRE/04/2016/0\linebreak00237. Authors would like to thank the IFCAM project ``Conservation laws: $BV^s$, interface and control". The first and third authors acknowledge the support of the Department of Atomic Energy, Government of India, under project no. 12-R\&D-TFR-5.01-0520. The second and  the fourth authors would like to thank TIFR-CAM for the hospitality. 




\end{document}